\documentclass[11pt]{article}

\usepackage{amsthm}
\usepackage{amssymb}
\usepackage{amsmath,enumerate}
\usepackage[ruled, lined, linesnumbered]{algorithm2e}
\usepackage{comment}
\usepackage{thm-restate}
\usepackage[T1]{fontenc}
\usepackage{lmodern}
\usepackage{url} 
\usepackage{hyperref}
\usepackage{graphicx}
\usepackage{subfigure}
\usepackage[noabbrev,capitalise]{cleveref}
\usepackage[affil-it]{authblk}
\usepackage{color}
\usepackage{tikz}
\usetikzlibrary{positioning}
\usepackage[margin=1in]{geometry}

\definecolor{normalColor}{rgb}{0.298,0.447,0.690}
 \definecolor{upperColor}{rgb}{0.867,0.518,0.322}
  \definecolor{withinColor}{rgb}{0.333,0.659,0.408} 
  \definecolor{betweenColor}{rgb}{0.506,0.447,0.702}

\newcommand{\eps}{\varepsilon}
\newcommand{\mc}[1]{\mathcal{#1}}

\renewcommand{\v}{\textup{\textsf{v}}}

\theoremstyle{plain}
\newtheorem{thm}{Theorem}[section]
\newtheorem{lem}[thm]{Lemma}
\newtheorem{claim}{Claim}
\newtheorem{prop}[thm]{Proposition}
\newtheorem{cor}[thm]{Corollary}

\newtheorem{prob}[thm]{Problem}

{\noindent \emph{Proof.} {}{#1}{}}{\hfill
	$\Diamond$\vspace{1em}}

\theoremstyle{plain} 
\newcommand{\thistheoremname}{}
\newtheorem{genericthm}[section]{\thistheoremname}

\theoremstyle{definition}

\DeclareTextAccent{\caronaccent}{T1}{7}

\newcommand{\diam}{diam}

\title{On the maximum degree and order of $K_t$-minor-free graphs with positive Lin--Lu--Yau curvature}
\author{Zi-Xia Song\thanks{Department of Mathematics, University of Central Florida, Orlando, FL 32816, USA. Supported by the Simons Foundation through the Travel Support for Mathematicians Program (SFI-MPS-TSM-00024413). E-mail address: Zixia.Song@ucf.edu}}
 \date{}
 
\begin{document}
\maketitle

\baselineskip 15pt
\begin{abstract}
Motivated by recent results on the order of connected graphs with positive Lin--Lu--Yau Ricci curvature under forbidden minor or forbidden subgraph conditions and minimum degree assumptions, we prove that, for every integer
$t\ge 5$, every locally finite graph $G$ with no $K_t$ minor, minimum degree at least $t-1$ and positive Lin--Lu--Yau Ricci curvature on every edge satisfies
\[
\Delta(G)=O(t^5\log^{3/2}t).
\]
The minimum degree condition $t-1$ is best possible.  Moreover, if $G$ is connected, then $G$ is finite and 
\[|V(G)|<2t\Delta(G)^6=O(t^{31}\log^9 t).
\]

\end{abstract}

\section{Introduction}\label{sec:intro}
A classical theme in Riemannian geometry is that lower bounds on Ricci curvature have strong global consequences. A prototypical result is the Bonnet--Myers theorem~\cite{Bonnet,Myers}: a complete $n$-dimensional
Riemannian manifold with Ricci curvature bounded below by $(n-1)k>0$ has diameter at most $\pi/\sqrt{k}$, where $k$ is a constant, and is therefore compact. Over the past several decades, various notions of Ricci curvature have been developed beyond the smooth setting. These include the curvature-dimension condition of Bakry and \'Emery~\cite{BakryEmery}, the optimal-transport approach of von Renesse and Sturm~\cite{vRS}, Sturm~\cite{Sturm06a,Sturm06b}, and Lott and Villani~\cite{LottVillani},
as well as, in the setting of graphs and Markov chains, the notions of
curvature introduced by Forman~\cite{Forman}, Lin and Yau~\cite{LinYau},
Erbar and Maas~\cite{ErbarMaas}, and Ollivier~\cite{Ollivier07,Ollivier};
see~\cite{NajmanRomon,OllivierSurvey} for surveys.

In this paper we study the curvature introduced by Lin, Lu and Yau~\cite{LLY} as a modification of Ollivier's coarse Ricci curvature.  For a vertex $x$ of a locally finite graph $G$ and $\alpha\in[0,1)$, let $m_x^\alpha$ be the probability measure that puts mass $\alpha$ on $x$ and mass $(1-\alpha)/d(x)$ on each neighbor of $x$. For an edge $xy\in E(G)$, let $\kappa_\alpha(x,y)=1-W(m_x^\alpha,m_y^\alpha)$, where $W$ is the $L^1$-Wasserstein distance with respect to the graph metric. The \emph{Lin--Lu--Yau curvature} of $xy$ is
\[
  \kappa(x,y)=\lim_{\alpha\to 1}\frac{\kappa_\alpha(x,y)}{1-\alpha};
\]
this limit exists~\cite{LLY}, and M\"unch and Wojciechowski~\cite{MW} expressed it without a limit in terms of the graph Laplacian (Theorem~\ref{thm:MW}). We say that $G$ is \emph{positively curved} if $\kappa(x,y)>0$ for every edge $xy$. Lin, Lu and Yau~\cite{LLY} proved a discrete Bonnet--Myers theorem: if $\kappa(x,y)\ge\kappa_0>0$ for every edge $xy$ of a connected locally finite graph $G$, then $\diam(G)\le 2/\kappa_0$, so that $G$ is finite (Theorem~\ref{thm:BonnetMyers}). The graphs attaining equality were studied in~\cite{CKKLMP,LiLu}, and estimates under integral curvature conditions were obtained in~\cite{RamosOlive}.

It is worth noting that positive curvature alone does not imply finiteness: Lu and Wang~\cite{LuWang} constructed infinite, locally finite, positively curved planar graphs. On the other hand, they observed that every edge of a positively curved graph with maximum degree at most $\Delta$ has curvature at least $1/(\Delta(\Delta-1))$ (Lemma~\ref{lem:LWlower}). Together with the discrete Bonnet--Myers theorem, this shows that \emph{a connected locally finite positively curved graph is finite if and only if its degrees are bounded}, in which case its order is bounded in terms of its maximum degree (Lemma~\ref{lem:order}). The central question is therefore which structural conditions bound the maximum degree of positively curved graphs.

This question is an analogue of a problem on the combinatorial curvature of graphs embedded in surfaces. Higuchi~\cite{Higuchi} conjectured that every connected plane graph with minimum degree at least $3$ and positive combinatorial curvature at every vertex is finite. DeVos and Mohar~\cite{DeVosMohar} proved this conjecture and showed that such a graph has at most $3444$ vertices unless it is a prism, an antiprism, or a projective-plane analogue of one of these; Ghidelli~\cite{Ghidelli} improved $3444$ to the optimal value $208$ (see also~\cite{NS,Oldridge}). For the Lin--Lu--Yau curvature, Lu and Wang proved the following analogue.

\begin{thm}[Lu and Wang~\cite{LuWang}]\label{thm:LW}
Let $G$ be a  connected   planar graph with $\delta(G)\ge 3$ and $\kappa(x,y)>0$ for every edge $xy\in E(G)$. Then $\Delta(G)\le 17$  and $|V(G)|\le 17^{544}$.
\end{thm}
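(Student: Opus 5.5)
We will prove Theorem~\ref{thm:LW} in two stages: first bound the maximum degree $\Delta(G)\le 17$, then bound the order. Curvature is computed via the limit-free formula of M\"unch and Wojciechowski (Theorem~\ref{thm:MW}), or equivalently by exhibiting explicit transport plans. For an edge $xy$ with $d(x)\le d(y)$, a transport plan from $m_x^\alpha$ to $m_y^\alpha$ gives a lower bound on $\kappa$. For the degree bound, however, we need the reverse: an \emph{upper} bound on $\kappa(x,y)$, obtained from a $1$-Lipschitz test function $f$ via Kantorovich duality. The standard estimate of this type reads
\[
\kappa(x,y)\le \frac{2+|N(x)\cap N(y)|}{d(x)}+\frac{2+|N(x)\cap N(y)|}{d(y)}-\ldots
\]
More precisely, $\kappa(x,y)$ is at most $\tfrac{1}{d(x)}(\#\text{neighbors of }x\text{ at distance}\le 1\text{ from }y)$ plus the analogous term for $y$, minus $1$. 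Positivity of $\kappa(x,y)$ therefore forces $x$ and $y$ to have many common neighbors, or many short connections between $N(x)$ and $N(y)$, relative to their degrees.

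Step one is the local structure at a high-degree vertex $v$ with $d(v)=D\ge 18$. Take a neighbor $u$ of $v$ with $d(u)\ge 3$ and apply the inequality to the edge $uv$. Since $1/D$ is small, positivity forces
\[
\frac{|\{w\in N(u): d(w,v)\le 1\}|}{d(u)}\ \gtrsim\ 1-\frac{\#\{z\in N(v): d(z,u)\le 1\}}{D}.
\]
By planarity, the neighbors of $v$ adjacent to $u$ number at most $2$ (the rotation around $v$; otherwise we would get a $K_{3,3}$-type configuration), unless $u$ is special. This should force almost all of $N(u)$ to lie in $N(v)\cup\{v\}$, and hence $d(u)$ small, around $3$ or $4$. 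Combined with $\delta\ge 3$, each neighbor $u$ of $v$ is adjacent to $v$, to its two cyclic neighbors in the rotation, and to little else. Counting then gives $\kappa(u,v)\le \tfrac{3}{D}+\ldots$, which is negative for large $D$. A careful case analysis on $d(u)\in\{3,4,5\}$ and on the face sizes around $v$ pins the threshold at $17$.

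Step two is the order bound. With $\Delta\le 17$, Lemma~\ref{lem:LWlower} gives $\kappa\ge 1/(17\cdot 16)$ on every edge. The discrete Bonnet--Myers theorem (Theorem~\ref{thm:BonnetMyers}) then yields $\diam(G)\le 2\cdot 17\cdot 16=544$. A BFS count gives $|V(G)|\le 1+17\sum_{i<544}16^i\le 17^{544}$, which is Lemma~\ref{lem:order}.

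The main obstacle is step one: extracting a sharp upper bound on curvature and combining it with the planar rotation structure around a high-degree vertex. Getting the exact constant $17$ requires a case analysis over the degrees and face lengths adjacent to $v$. I would first try to show that some neighbor $u$ of $v$ has at most two neighbors within $N(v)$ and $d(u)\ge 3$, then bound $\kappa(u,v)$ via an explicit $1$-Lipschitz function (value $+1$ on $N(u)\setminus N[v]$, $0$ on the common region, $-1$ near $v$'s far side).
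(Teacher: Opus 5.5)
Your second stage is exactly the paper's route to the order bound. Lemma~\ref{lem:LWlower} and Theorem~\ref{thm:BonnetMyers} give diameter at most $2\cdot 17\cdot 16=544$, and Lemma~\ref{lem:order} with $\Delta\le 17$ gives $|V(G)|\le 17^{544}$.

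The gap is the first stage, $\Delta(G)\le 17$. The paper does not reprove this; it takes it from Lu and Wang, whose argument combines Lemma~\ref{lem:LW1} with the cyclic order of the neighbors of a vertex in a plane embedding. Your sketch of this stage rests on false statements.
\begin{itemize}
\item Your ``more precise'' upper bound already fails for $K_4$, which is planar with $\delta=3$. Both terms equal $1$, so it predicts $\kappa\le 1$, whereas $\kappa=4/3$.
\item The claim that, by planarity, a neighbor $u$ of $v$ has at most two neighbors in $N(v)$ is also false. Planarity only makes $G[N(v)]$ outerplanar. If $v$ is joined to a zigzag-triangulated polygon, most $u\in N(v)$ have four neighbors in $N(v)$.
\item $d(u)$ is not bounded a priori, so a case analysis over $d(u)\in\{3,4,5\}$ cannot be exhaustive.
\end{itemize}

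More fundamentally, positivity does not push $N(u)$ into $N[v]$. Lemma~\ref{lem:LW1} only forces $u$ to reach a large fraction of $N(v)$ by paths of length at most two avoiding $v$, and these paths may pass through vertices outside $N[v]$. The bipyramid over a $D$-gon shows where your plan breaks: take hubs $v,z$, each joined to every vertex of a cycle $u_1u_2\cdots u_D$. This graph is planar with $\delta=4$.
\begin{itemize}
\item Compute with Theorem~\ref{thm:MW}, using an integer-valued $f$ and splitting on $f(z)\in\{0,1,2\}$. For $D\ge 9$ this gives $\kappa(v,u_i)=\kappa(z,u_i)=8/D-\tfrac12$, and curvature $3/4$ on rim edges.
\item So for $D=15$ the graph is positively curved, yet your estimate gives $\kappa(v,u_1)\le \tfrac{3}{15}+\tfrac34-1<0$. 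Also, every $u_i$ has the neighbor $z\notin N[v]$, contrary to your claim that $N(u)$ lies almost entirely in $N[v]$.
\item For $D\ge 17$ the spokes are negatively curved, but your proposed test function cannot detect it. It puts $+1$ on $N(u_1)\setminus N[v]=\{z\}$, $0$ on $N(u_1)\cap N(v)$, and $-1$ on the far side. Since $f(z)=1$ forces $f\ge 0$ on the whole rim, you only get $\Delta f(v)-\Delta f(u_1)=1/D+3/4>0$.
\item The actual witness is the opposite choice: $f=1$ on $u_1,u_2,u_D$, $f=0$ on $v,z,u_3,u_{D-1}$, and $f=-1$ on the rest of the rim. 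This is Lemma~\ref{lem:LW1} with $S=\{u_2,u_D\}$.
\end{itemize}

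Thus a single neighbor $u$ with a single local test function cannot give the bound. You need Lemma~\ref{lem:LW1} with suitable sets $S$ applied to many neighbors of $v$ at once, combined with a global argument using the cyclic order of $N(v)$ in the embedding. Your proposal contains no such argument, and nothing in it produces the constant $17$.
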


The condition $\delta(G)\ge 3$ cannot be dropped~\cite[Fig.~2]{LuWang}. Theorem~\ref{thm:LW} has been followed by results for other classes of graphs. Brooks, Osaye, Schenfisch, Wang and Yu~\cite{BOSWY} and Liu, Lu and Wang~\cite{LiuLuWang} showed that every positively curved connected outerplanar graph with minimum degree at least $2$ has maximum degree at most $9$ and at most $10$ vertices, and that both bounds are sharp; recall that a graph is outerplanar if and only if it has neither $K_4$ nor $K_{2,3}$ as a minor~\cite{CH}. Very recently, Bai, Liu, and the present author~\cite{BLS} extended
  the results on outerplanar graphs to graphs with no $K_{2,t}$ minor. They proved that, for every $t\ge 4$, every positively curved connected graph with minimum degree at least two  and no $K_{2,t}$ minor has maximum degree at most $2t+4$ and at most 
$4(t+2)^4$ vertices. Moreover, in the case $t=3$, they showed that every positively curved connected graph with minimum degree at least two  and no $K_{2,3}$ minor has  at most  $10$ vertices. The positively curved graphs were classified among the generalized Halin graphs~\cite{CLLY} and among the $C_4$-free graphs with minimum degree at least $2$~\cite{LinYou}, and a tight bound was obtained for $\{C_3,C_5\}$-free graphs~\cite{GLLY}. Positively curved regular graphs were studied in~\cite{BJLLLV,Hehl}, and curvature estimates for amply regular graphs, with applications to their diameter and finiteness, were obtained in~\cite{CHLZ,HLX}. After our paper was posted on arXiv, Esperet and
Semaan~\cite{EsperetSemaan2026} independently proved that, for any
fixed integers $s,t$ with $2\le s\le t$, every  positively
curved graph with minimum degree at least $s$ and no
$K_{s,t}$ minor has bounded maximum degree and, hence,
bounded order, with both bounds depending only on $s$ and $t$.  Taking $s=t$ in their theorem and using that  graphs with no $K_{t+1}$ minor  have  no  $K_{t,t}$ minor, their result implies that the graphs in Theorem~\ref{thm:main} (with $t$ replaced by $t+1$) have bounded maximum degree and order, while Theorem~\ref{thm:main} gives explicit polynomial bounds.

\subsection*{Main results}

A graph $H$ is a \emph{minor} of a graph $G$ if $H$ can be obtained from a subgraph of $G$ by contracting edges. We say that a locally finite graph $G$ has a $K_t$ minor if and only if some  finite subgraph of $G$ has a $K_t$ minor.  Graph minors play a central role in graph theory. By Wagner's theorem~\cite{Wagner}, a   graph is planar if and only if it has neither $K_5$ nor $K_{3,3}$ as a minor. The celebrated Graph Minor Theorem of  Robertson and Seymour~\cite{RS} asserts that every class of finite graphs that is closed under taking minors can be described by finitely many forbidden minors. Hadwiger's Conjecture~\cite{Hadwiger}, which asserts that every graph with no $K_t$ minor is $(t-1)$-colorable, is one of the most famous open problems in the area.  Since  planar graphs have no $K_5$ minor and outerplanar graphs have no $K_4$ minor,  the graphs with no $K_t$ minor form a natural generalization of both classes. It is therefore natural to ask whether Theorem~\ref{thm:LW} and the results of~\cite{BOSWY,LiuLuWang} extend to graphs with no $K_t$ minor. The proof of Theorem~\ref{thm:LW} uses the cyclic order of the neighbors of a vertex in a plane embedding, which is not available for minor-closed classes, and the minimum degree condition has to be adjusted.  For integers $a\ge 1$ and $b\ge 0$ let $K^*_{a,b}$ denote the graph obtained from the complete bipartite graph $K_{a,b}$ by adding all edges between the vertices of the part of size $a$. The graphs $K^*_{t-2,n-t+2}$ are the classical extremal graphs for $K_t$ minors; they have no $K_t$ minor, and for $t\le 7$ they have the maximum number of edges among all graphs on $n$ vertices with no $K_t$ minor~\cite{Mader68}.

\begin{prop}\label{prop:sharp}
Let $t\ge 3$ and $n\ge t$. The graph $K^*_{t-2,n-t+2}$ has no $K_t$ minor, minimum degree $t-2$, maximum degree $n-1$, and positive Lin--Lu--Yau curvature on every edge. More precisely, if $X$ and $Y$ denote the parts of sizes $t-2$ and $n-t+2$, then for $x, x'\in X$ with $x\ne x'$ and $y\in Y$, 
\[
  \kappa(x,y)=\frac{t-1}{n-1}  \quad\text{and}\quad \kappa(x,x')=\frac{n}{n-1}.
\]
\end{prop}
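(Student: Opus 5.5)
The plan is to verify the three structural claims directly and then compute the two curvature values exactly, by matching an explicit transport plan with an explicit $1$-Lipschitz potential. Throughout, the graph is $G=K^*_{t-2,n-t+2}$ with $|X|=t-2$ and $|Y|=n-t+2\ge 2$; recall that $X$ is a clique, $Y$ is independent, and every vertex of $X$ is joined to every vertex of $Y$.

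\emph{No $K_t$ minor.} Suppose $B_1,\dots,B_t$ are the branch sets of a $K_t$ minor. At most $t-2$ of them meet $X$. Every other branch set lies inside the independent set $Y$ and is connected, so it is a single vertex of $Y$. Two such singletons would have to be adjacent, which is impossible because $Y$ is independent. Hence at most $(t-2)+1=t-1$ branch sets exist, a contradiction.

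\emph{Degrees.} Each $x\in X$ is adjacent to all other $n-1$ vertices, so $d(x)=n-1$. Each $y\in Y$ has $N(y)=X$, so $d(y)=t-2$. This gives minimum degree $t-2$ and maximum degree $n-1$.

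\emph{The edge $xx'$ with $x,x'\in X$.} Both endpoints are adjacent to every other vertex, so the neighborhood structure seen by $x$ and $x'$ is exactly that of an edge in $K_n$. I will compute $W(m_x^\alpha,m_{x'}^\alpha)$ directly for $\alpha$ close to $1$.
\begin{itemize}
\item The two measures agree on $V\setminus\{x,x'\}$.
\item The only discrepancy is mass $\alpha-(1-\alpha)/(n-1)$, which must move from $x$ to $x'$ across one edge. This gives $W\le \alpha-\frac{1-\alpha}{n-1}$.
\item The potential $f=\mathbf 1_{x}$ is $1$-Lipschitz and certifies equality.
\end{itemize}
Therefore $\kappa_\alpha=(1-\alpha)\frac{n}{n-1}$, and $\kappa(x,x')=\frac{n}{n-1}$.

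\emph{The edge $xy$ with $x\in X$, $y\in Y$.} Here $m_x^\alpha$ puts $\alpha$ on $x$ and $\frac{1-\alpha}{n-1}$ on every other vertex. The measure $m_y^\alpha$ puts $\alpha$ on $y$ and $\frac{1-\alpha}{t-2}$ on each vertex of $X$, including $x$. For $\alpha$ near $1$, I plan the following transport.
\begin{itemize}
\item Keep all common mass in place.
\item At $y$, the deficit is $\alpha-\frac{1-\alpha}{n-1}$. Fill it from the surplus at $x$, at cost $1$ per unit.
\item Each $x'\in X\setminus\{x\}$ has deficit $\frac{1-\alpha}{t-2}-\frac{1-\alpha}{n-1}$.
\item Each $y'\in Y\setminus\{y\}$ has surplus $\frac{1-\alpha}{n-1}$ and lies at distance $1$ from every $x'\in X$.
\item Send these surpluses to the deficits in $X\setminus\{x\}$ at cost $1$ per unit, and cover whatever deficit remains from $x$ at cost $1$ per unit.
\end{itemize}
I will check that the total surplus equals the total deficit, so every unit moves distance at most $1$. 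The cost is then the total positive part of $m_x^\alpha-m_y^\alpha$. Carrying out this bookkeeping, I expect to get
\[
W(m_x^\alpha,m_y^\alpha)\le 1-(1-\alpha)\frac{t-1}{n-1}.
\]
For the matching lower bound I will use a $1$-Lipschitz test function that separates $\{x\}\cup(Y\setminus\{y\})$ from $y$. The first choice to try is $f(y)=0$, $f=1$ on $X$, and $f=2$ on $Y\setminus\{y\}$. This is $1$-Lipschitz because distinct vertices of $Y$ are at distance $2$ and every $Y$–$X$ pair is adjacent. If that choice falls short, I will shift the values on $X\setminus\{x\}$. Computing $\sum f\,(m_x^\alpha-m_y^\alpha)$ should reproduce the same value, giving $\kappa(x,y)=\frac{t-1}{n-1}$.

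As a cross-check of both values, I will also evaluate the Münch--Wojciechowski formula (Theorem~\ref{thm:MW}) with the same potentials.

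The main obstacle is the lower bound in the $xy$ case, that is, choosing the dual potential so that it exactly matches the coupling cost. The distance-$2$ pairs inside $Y$ are what allow $f$ to take the value $2$ there, and I will verify the arithmetic for this step carefully rather than rely on symmetry.
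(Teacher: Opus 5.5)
Your arguments for the absence of a $K_t$ minor, for the degrees, and for the edge $xx'$ are correct. Your first-choice potential for the edge $xy$ is also correct: it is $f=d(\cdot,y)$, and it gives $\sum_v f(v)\bigl(m_x^\alpha(v)-m_y^\alpha(v)\bigr)=1-(1-\alpha)\frac{t-1}{n-1}$ exactly, so the lower bound on $W$ is not where the difficulty lies.

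The genuine gap is in the coupling. This is the half that proves $\kappa(x,y)\ge\frac{t-1}{n-1}$, i.e.\ the positivity claim itself, and your plan is infeasible. The surplus at $x$ is $\alpha-\frac{1-\alpha}{t-2}$. This is strictly smaller than the deficit $\alpha-\frac{1-\alpha}{n-1}$ at $y$, because $n-1>t-2$. So $x$ cannot fill $y$, and it has nothing left over for $X\setminus\{x\}$. Conversely, the surplus $\frac{(n-t+1)(1-\alpha)}{n-1}$ on $Y\setminus\{y\}$ exceeds the total deficit $\frac{(t-3)(n-t+1)(1-\alpha)}{(t-2)(n-1)}$ on $X\setminus\{x\}$ by $\gamma:=(1-\alpha)\bigl(\frac{1}{t-2}-\frac{1}{n-1}\bigr)>0$. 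The only remaining sink for this excess is $y$, which is at distance $2$. Equality of total surplus and total deficit holds for any two probability measures and says nothing about distances, so the inference that every unit moves distance at most $1$ is false. If you carried out the bookkeeping as described, the total positive part would give $W\le 1-(1-\alpha)\bigl(\frac{t-2}{n-1}+\frac{1}{t-2}\bigr)$. That is strictly below your own lower bound, so the plan does not produce the value you expect.

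The repair is short. Take $\alpha\ge\frac{1}{t-1}$ and use the following coupling.
\begin{itemize}
\item Send $\alpha-\frac{1-\alpha}{t-2}$ from $x$ to $y$, at cost $1$.
\item Send $\frac{1-\alpha}{t-2}-\frac{1-\alpha}{n-1}$ from $Y\setminus\{y\}$ to each $x'\in X\setminus\{x\}$, at cost $1$.
\item Send the remaining $\gamma$ from $Y\setminus\{y\}$ to $y$, at cost $2$.
\end{itemize}
Feasibility follows from $(n-t+1)\frac{1-\alpha}{n-1}=(t-2)\bigl(\frac{1-\alpha}{t-2}-\frac{1-\alpha}{n-1}\bigr)$. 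The cost is $\alpha-\frac{1-\alpha}{t-2}+(t-1)\bigl(\frac{1-\alpha}{t-2}-\frac{1-\alpha}{n-1}\bigr)=1-(1-\alpha)\frac{t-1}{n-1}$, which matches your potential.

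Complementary slackness with $f=d(\cdot,y)$ explains this structure: every pair used must satisfy $f(u)-f(v)=d(u,v)$, so an optimal plan never moves mass from $x$ to $X\setminus\{x\}$. Meanwhile the shortfall at $y$ forces mass from $Y\setminus\{y\}$ to $y$. This is the opposite of what your plan assumed. With the corrected coupling, $\kappa_\alpha(x,y)=(1-\alpha)\frac{t-1}{n-1}$ for $\alpha$ near $1$, and the proposition follows.
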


Proposition~\ref{prop:sharp} shows that, for every $t\ge 3$, there exist
connected graphs with no $K_t$ minor, minimum degree $t-2$, and positive
Lin--Lu--Yau curvature on every edge whose maximum degree is arbitrarily
large. For $t=5$, this shows that Theorem~\ref{thm:LW} does not extend to
graphs with no $K_5$ minor and minimum degree $3$. Indeed, the graphs
$K^*_{3,s}$ have no $K_5$ minor, have minimum degree $3$, and are positively
curved, but are nonplanar for $s\ge 3$ and have unbounded maximum degree.
Our main result below shows that increasing the minimum degree to $t-1$
suffices to bound the maximum degree. To state it, for $t\ge 3$ let
\[
  c(t)=\min\{c\mid  \text{every graph } G  \text{ with }    |E(G)|\ge c|V(G)| \text{ has a }K_t   \text{   minor}\}.
\]
  Mader~\cite{Mader67} proved that the displayed set indeed has a minimum,
and later showed in~\cite{Mader68} that $c(t)\le 8t\log t$.
Kostochka~\cite{Kostochka} and Thomason~\cite{Thomason84} subsequently
proved that
\[
c(t)=\Theta(t\sqrt{\log t}),
\]
and Thomason~\cite{Thomason01} later determined the asymptotics of $c(t)$.
For $t\le 9$, the exact extremal function is known from results of
Mader~\cite{Mader68}, J{\o}rgensen~\cite{Jorgensen}, and the present author
and Thomas~\cite{SongThomas}, yielding
\[
c(t)=t-2 \qquad \text{for } 3\le t\le 9.
\]  

For $t\le 4$, every graph with minimum degree at least $t-1$ contains a
$K_t$ minor. For $t\le 3$, this is elementary, since every graph with
minimum degree at least $2$ contains a cycle. For $t=4$,
Hadwiger~\cite{Hadwiger} and Dirac~\cite{Dirac} proved that every graph
with minimum degree at least $3$ contains a $K_4$ minor. We prove the following main result for $t\ge5$.

\begin{restatable}{thm}{finite}\label{thm:main}
Let $t\ge 5$, and let $G$ be a locally finite  graph with no $K_t$ minor  such that $\delta(G)\ge t-1$ and
$\kappa(x,y)>0$ for every edge $xy\in E(G)$. Then
\[
\Delta(G)<5\,c(t)^3t^2.
\]
Moreover, if $G$ is connected, then $G$ is finite and 
\[|V(G)|<2t\,\Delta(G)^6<31250\,c(t)^{18}t^{13}.
\]
 
\end{restatable}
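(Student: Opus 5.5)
Our plan is to take a vertex $x$ of large degree $D=d(x)$ and use positive curvature on the edges $xy$ to force a dense structure near $x$, which will produce a $K_t$ minor once $D\ge 5c(t)^3t^2$. The curvature tool will be the M\"unch--Wojciechowski formula (Theorem~\ref{thm:MW}), used through its standard transport consequence. For an edge $xy$ with $d(x)=D$ large and $d(y)=d$, we have $\kappa(x,y)>0$ only if a $1$-Lipschitz test function cannot separate the measures. Concretely, we will use the estimate
\[
\kappa(x,y)\le \frac{2+|N(x)\cap N(y)|}{d(x)}+\frac{\#\{\text{neighbors of }y \text{ not adjacent to } x\} \text{-type terms}}{d(y)}-\Bigl(1-\frac{\text{\# of }N(x)\setminus N[y]\text{ matched within distance }\le 2}{d(x)}\Bigr)\cdots.
\]
In practice we will derive the cleaner inequality: if $\kappa(x,y)>0$, then the neighbors of $x$ outside $N[y]$ must be transportable, at cost $\le 2$, onto the mass of $y$. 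This forces that all but $O(D/d(y))$ of the neighbors of $x$ are either common neighbors of $x,y$ or adjacent to a vertex of $N(y)$ that receives them. Since $y$ has bounded degree relative to $D$, the transport plan sends about $D/d(y)$ mass units per neighbor of $y$, so $y$'s neighborhood must ``absorb'' $N(x)$ with many edges between $N(x)$ and $N(y)\setminus\{x\}$.

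The key steps run as follows. (1) Fix $x$ with $d(x)=D$ and partition $N(x)$ into high-degree vertices, with degree $\ge D/(c(t)t)$ say, and low-degree vertices. (2) Use that $G[N(x)]$ has no $K_{t-1}$ minor, so it has at most $c(t-1)|N(x)|\le c(t)D$ edges, and hence average degree $<2c(t)$ inside $N(x)$. Consequently most $y\in N(x)$ have few common neighbors with $x$. (3) For such $y$, positive curvature via Theorem~\ref{thm:MW} gives $|N(x)\cap N(y)|+|\{\text{vertices of } N(y)\setminus N[x] \text{ with a neighbor in } N(x)\setminus N[y]\}|$ large, or more usefully it forces $d(y)$ to be large. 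Here we will use $\delta\ge t-1$: when $y$ has only $t-2$ neighbors besides $x$, the transport cost is about $1-\frac{t-1}{d(y)}$-type. We will compare this with $K^*_{t-2,n}$ (Proposition~\ref{prop:sharp}), where curvature is exactly on the edge of positivity. The extra neighbor from $\delta\ge t-1$ is what will push a contradiction. (4) Obtain many vertices $y\in N(x)$ with paths of length $\le 2$ through $N(y)$ into a large portion of $N(x)$. Contract each $y$ with its private second neighbors to get a minor $H$ on vertex set $N(x)$ with more than $c(t-1)|V(H)|$ edges. Together with $x$, which is adjacent to all of $V(H)$, this gives a $K_t$ minor, a contradiction. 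The counting $D\ge 5c(t)^3t^2$ will arise from losing a factor $c(t)$ in each of the three steps: sparsity of $N(x)$, the degree threshold, and disjointness of the contracted branch sets, together with a factor $t^2$ from the degree bound on $y$.

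For the second statement, once $\Delta$ is bounded, Lemma~\ref{lem:LWlower} gives $\kappa\ge 1/(\Delta(\Delta-1))$ on every edge. Theorem~\ref{thm:BonnetMyers} then gives $\diam(G)\le 2\Delta(\Delta-1)$, and Lemma~\ref{lem:order} bounds $|V(G)|$ by the Moore-type count. We expect to sharpen this to $2t\Delta^6$ by using the sparsity (no $K_t$ minor) to bound ball growth polynomially, for example by showing that distance spheres grow by at most a factor tied to $t$. We expect the main obstacle to be step (4): making the length-$2$ connections given by the transport plan into vertex-disjoint branch sets. Our first attempt will be a random or greedy selection of a subset of the vertices $y$ of size $\approx D/c(t)$, keeping only second neighbors whose degree into $N(x)$ is small, so that collisions can be discarded at a constant-factor loss.
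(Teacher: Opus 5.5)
Your proposal has genuine gaps. It is a heuristic outline, and the two places where the real work happens are missing.

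\textbf{Missing: hubs and the set $W$.} The obstruction is not low-degree neighbours of $x$ but \emph{hubs}: vertices $v\ne x$, inside or outside $N(x)$, with $|N(v)\cap N(x)|\ge\varepsilon D$. Lemma~\ref{lem:LW1} with $S=\{z\}$ gives $D/d(y)<|N(z)\cap N(x)|+|N(y)\cap N(x)|+3$ for each $z\in N(y)\setminus\{x\}$. So positive curvature does not force $d(y)$ to be large; the inequality is equally satisfied if every other neighbour of $y$ is a hub. This is exactly what happens in $K^*_{t-2,n-t+2}$, where each $y\in Y$ reaches $N(x)$ only through the $t-3$ hubs $X\setminus\{x\}$.

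Your step (1) splits $N(x)$ by degree, so it does not isolate these vertices. Your step (3) gives no actual mechanism by which $\delta\ge t-1$ helps.

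The paper first bounds the hub set $U$ by $5c^2-4c$ via edge counting. It then splits the non-hub neighbours of $x$ into $W$ (at least $t-2$ neighbours in $U$) and $Z$. For vertices of $W$, curvature gives nothing usable, since all their length-$2$ connections may run through hubs, and nothing in your plan controls them. The paper bounds $|W|\le c\binom{t-2}{2}|U|$ by a separate minor argument:
\begin{itemize}
\item Suppose some $z\in W$ had $t-2$ hub neighbours that are pairwise ``mates'', meaning each pair has more than $\binom{t-2}{2}$ common non-hub neighbours in $N(x)$. Then $\{x\}$, $\{z\}$ and these hubs, extended by suitable distinct common neighbours, form a $K_t$ model.
\item Otherwise each $z\in W$ sees an unmated pair of hubs. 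Contracting $z$ into one end of its pair shows there are at most $c|U|$ such pairs, and each unmated pair accounts for at most $\binom{t-2}{2}$ vertices of $W$.
\end{itemize}
Only for $y\in Z$ does $\delta\ge t-1$ enter. It guarantees a non-hub neighbour $z\ne x$ and gives $d(y)-|R_y|\le t-2$. Then Lemma~\ref{lem:LW1}, applied first with $S=\{z\}$ (to bound $D/d(y)$ by roughly $\varepsilon D$) and then with $S=R_y$, shows that $y$ reaches more than $4D/5-t(|N(x)\cap N(y)|+3)$ vertices of $N(x)$ through non-hubs.

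\textbf{Step (4) does not work as stated.} Middle vertices are shared: even a non-hub $z$ may be adjacent to up to $\varepsilon D$ vertices of $N(x)$. Making middle vertices private to single $y$'s therefore loses a factor of order $\varepsilon D$, not a constant, so you cannot directly exhibit more than $cD$ edges on $N(x)$.

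The paper builds the argument around this loss instead. Lemma~\ref{lem:pairs} contracts each non-hub $z\notin N[x]$ onto a neighbour $w_z\in N(x)$ lying in at least $2|Q_z|/r$ of the pairs that $z$ mediates. This yields the upper bound $\sum_{y\in Z}|C_y|\le\varepsilon D(cD+e)\le 2D^2/5$. Against the lower bound above, this gives $|Z|\le D(\frac12+\frac1{c^2t})$. The contradiction then comes from $D\le|U|+|W|+|Z|$ together with $|U|+|W|=O(c^3t^2)$. Without the bound on $W$ there is no contradiction at all, since $Z$ could be empty.

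\textbf{The order bound.} Your plan for $|V(G)|<2t\Delta^6$ cannot succeed. The diameter bound is $2\Delta(\Delta-1)$, so any per-step growth factor depending on $t$ still gives a bound exponential in $\Delta^2$. Moreover, bounded-degree minor-free graphs such as trees do grow exponentially.

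The polynomial bound is Theorem~\ref{thm:polyorder}:
\begin{itemize}
\item $\mu_2\ge 1/(\Delta(\Delta-1))$ by Theorem~\ref{thm:LLYeig}, hence $\lambda_2\ge\delta/(\Delta(\Delta-1))$.
\item The spectral lower bound on treewidth (Theorem~\ref{thm:GHNOO}) combined with $\operatorname{tw}(G)<t^{3/2}n^{1/2}$ (Theorem~\ref{thm:AST}) gives $n<(t^{3/2}K+1)^2<2t\Delta^6$, where $K=1+\Delta^2(\Delta-1)/\delta$.
\end{itemize}
Your finiteness argument via Lemma~\ref{lem:order} is fine.
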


 Proposition~\ref{prop:sharp} shows that  the condition $\delta(G)\ge t-1$ in Theorem~\ref{thm:main} cannot be replaced by $\delta(G)\ge t-2$. The class of graphs in Theorem~\ref{thm:main} is not empty; for instance, the graph obtained from $K_{t+1}$ by deleting three pairwise disjoint edges has no $K_t$ minor, minimum degree $t-1$ and Lin--Lu--Yau curvature at least $1$ on every edge. 
The upper bound in Theorem~\ref{thm:main} is probably far from optimal.  Combining Theorem~\ref{thm:main} with the bounds on $c(t)$ gives the following  explicit estimates.
\begin{cor}\label{cor:main}
Let $t\ge 5$, and let $G$ be a connected locally finite graph with no $K_t$ minor such that $\delta(G)\ge t-1$ and $\kappa(x,y)>0$ for every edge $xy\in E(G)$. Then
\begin{enumerate}[(i)]
\item $\Delta(G)=O(t^5\log^{3/2} t)$ and $|V(G)|=O(t^{31}\log^9 t)$;
 
\item $\Delta(G)<5\,(t-2)^3t^2$ and  $|V(G)|<31250(t-2)^{18}t^{13}$\, if\, $5\le t\le 9$.
\end{enumerate}
\end{cor}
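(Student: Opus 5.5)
The corollary is a direct substitution of known bounds on $c(t)$ into Theorem~\ref{thm:main}. I would take Theorem~\ref{thm:main} as given, so that for connected $G$ satisfying the hypotheses,
\[
\Delta(G)<5\,c(t)^3t^2 \quad\text{and}\quad |V(G)|<31250\,c(t)^{18}t^{13}.
\]
Both parts then follow by inserting an upper bound for $c(t)$ and checking that the resulting exponents match the stated ones.

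For part (i), I would use the Kostochka--Thomason bound $c(t)=\Theta(t\sqrt{\log t})$, and in fact only its upper half: there is an absolute constant $C$ with $c(t)\le C\,t\sqrt{\log t}$ for all $t\ge 5$. Then
\[
\Delta(G)<5C^3t^3\log^{3/2}t\cdot t^2=O(t^5\log^{3/2}t).
\]
For the order,
\[
|V(G)|<31250\,C^{18}t^{18}\log^{9}t\cdot t^{13}=O(t^{31}\log^9 t).
\]
If one prefers to avoid citing the Kostochka--Thomason constant, Mader's weaker bound $c(t)\le 8t\log t$ would only give $\log^3 t$ and $\log^{18}t$ factors. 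So the stated estimates genuinely require the $\Theta(t\sqrt{\log t})$ bound.

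For part (ii), I would use the exact value $c(t)=t-2$ for $3\le t\le 9$, due to Mader, J{\o}rgensen, and Song--Thomas, restricted to $5\le t\le 9$. Substituting gives $\Delta(G)<5(t-2)^3t^2$ and $|V(G)|<31250(t-2)^{18}t^{13}$ directly.

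There is no real obstacle here. The only care needed is to check that the definition of $c(t)$ as a minimum matches the normalization in the cited results. Those results are stated as edge-density thresholds $|E|\ge c|V|$, so the substitution is legitimate. In particular, the bound $c(t)=t-2$ corresponds to Mader's theorem that $|E|\ge (t-2)|V|-\binom{t-1}{2}+1$ forces a $K_t$ minor, so every graph with $|E|\ge (t-2)|V|$ has one.
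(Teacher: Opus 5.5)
Your proposal is correct and is exactly the paper's argument: substitute the Kostochka--Thomason upper bound $c(t)=O(t\sqrt{\log t})$, and the exact value $c(t)=t-2$ for $5\le t\le 9$, into the two bounds of Theorem~\ref{thm:main}. One small imprecision in your last paragraph: the threshold $(t-2)|V|-\binom{t-1}{2}+1$ is Mader's theorem only for $t\le 7$, whereas for $t=8,9$ the results of J{\o}rgensen and Song--Thomas have exceptional graphs at that threshold; these have fewer than $(t-2)|V|$ edges, so $c(t)=t-2$ still holds.
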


\subsection*{Outline of the proof}

We proceed by contradiction. Let $x$ be a vertex of degree
$D\ge 5c(t)^3t^2$. Let $U$ be the set of all vertices other than $x$
that have at least $D/(5c(t))$ neighbors in $N(x)$
(see Figure~\ref{fig:sets}). By the linear bound on the number of edges in graphs with no $K_t$ minor, we have $|U|=O(c(t)^2)$.
A neighbor $y$ of $x$ with fewer than $t-2$ neighbors in $U$ has, by
the minimum degree condition, a neighbor other than $x$  outside  $U$. The key lemma of Lu and Wang (Lemma~\ref{lem:LW1}), which is a
consequence of the formula of M\"unch and Wojciechowski, then shows that $y$ reaches at least about four fifths of $N(x)$ by paths of length two avoiding $x$ and $U$. A counting argument based on a new lemma on common neighbors (Lemma~\ref{lem:pairs}) then shows that there are at most about $D/2$ such vertices $y$.
The remaining neighbors of $x$ either belong to $U$ or have at least
$t-2$ neighbors in $U$. Using pairs of vertices in $U$ with many common neighbors in $N(x)\setminus U$, following an idea of Norin, Postle, and Song~\cite{NPS}, we show that every vertex of the latter kind has two neighbors in $U$ with few common neighbors; otherwise, $G$ contains a $K_t$ minor. Contracting each such vertex to one of these two neighbors then shows that there are at most
$c(t)\binom{t-2}{2}|U|$
vertices of the latter kind. Combining these bounds yields
$D<5c(t)^3t^2$, a contradiction.\medskip

The remainder of the paper is organized as follows.
Section~\ref{sec:prelim} collects the necessary definitions and tools. In Section~\ref{sec:proof}, we prove Theorem~\ref{thm:main}.
Finally, Section~\ref{sec:remarks} contains concluding remarks and open problems.

\section{Preliminaries}\label{sec:prelim}

All graphs in this paper are simple, that is, with no loops or parallel edges.  Graphs are finite unless they are explicitly said to be locally finite; the graphs in the definition of $c(t)$ and in Section~\ref{sec:tw} are finite. Given a graph $G$, for a vertex $v$ we write $N(v)$ for its neighborhood and $d(v)=|N(v)|$. We use $\Delta(G)$ and $\delta(G)$ to denote the maximum and minimum degree of $G$, respectively. 
For $S\subseteq V(G)$ we write 
\[N(S):=\{v\in V(G)\setminus S\mid vu\in E(G) \text{ for some } u\in S\}\]
and $N[S]:=N(S)\cup S$. We use $G[S]$ to denote  the subgraph induced by $S$, that is, the graph with vertex set $S$ and edge set $\{xy\in E(G)\mid x,y\in S\}$.  We write $e(G)$ for the number of edges of  $G$,  and $d(u,v)$ for the distance between $u$ and $v$ in $G$ if $G$ is connected. For any positive integer $k$, we write  $[k]$ for the set $\{1,2, \ldots, k\}$. We use the convention   ``$A:=$'' to mean that $A$ is defined to be the right-hand side of the relation.

\subsection{Lin--Lu--Yau curvature}\label{sec:curv}

Let $G$ be a connected  locally finite graph. 
We use the same notation as in~\cite{LLY}.
A probability distribution (over the vertex set $V(G)$) is a mapping $m: V(G)\to [0,1]$ satisfying $\sum_{x\in V(G)} m(x) = 1$.
Suppose two probability distributions $m_1$ and $m_2$ have finite support. A {\em coupling} between $m_1$ and $m_2$ is a mapping $A: V(G)\times V(G) \to [0,1]$ with finite support so that 
$\sum_{y \in V(G)} A(x,y) = m_1(x) \textrm{ for every $x\in V(G)$ and } \sum_{x\in V(G)} A(x,y) = m_2(y)$ for every $y\in V(G)$. The \textit{Wasserstein-1 transportation distance} between two probability distributions $m_1$ and $m_2$ is defined as follows:
$$W(m_1, m_2) = \inf_A \sum_{x,y\in V(G)} A(x,y) d(x,y),$$
where the infimum is taken over all couplings $A$ between $m_1$ and $m_2$. A random walk $m$ on $G=(V,E)$ is defined as a family of probability measures $\{m_v(\cdot)\}_{v\in V(G)}$ such that $m_v(u) = 0$ for all $u \notin N[v]$. It follows that  $m_v(u) \geq 0$ for all $v,u\in V(G)$ and $\sum_{u\in N[v]} m_v(u) = 1$ for all $v\in V(G)$. 

Let $\binom{V(G)}{2}$ denote the set of all unordered pairs of vertices in $G$. In~\cite{Ollivier}, Ollivier defined the \textit{coarse Ricci curvature} $\kappa: \binom{V(G)}{2} \to \mathbb{R}$ of a vertex pair in $G$ equipped with a random walk $m$ as follows: for distinct vertices $x, y\in V(G)$,  
$$\kappa_{\textrm{Ollivier}}(x,y) = 1 - \frac{W(m_x, m_y)}{d(x,y)}.$$
Note that $W(m_x, m_y)=W(m_y, m_x)$ for any distinct $x,y\in V(G)$, and hence $\kappa_{\textrm{Ollivier}}(x,y)=\kappa_{\textrm{Ollivier}}(y,x)$. One natural choice of random walks in graphs is the \textit{$\alpha$-lazy random walk} ($0\leq \alpha < 1$) defined as 
$$
m_x^{\alpha}(v) = \begin{cases} 
                        \alpha & \textrm{ if $v=x$,}\\
                        (1-\alpha)/d(x) &\textrm{ if $v\in N(x)$,}\\
                        0 & \textrm{ otherwise.}
                    \end{cases}
$$
Lin, Lu and Yau~\cite{LLY} defined
\[
  \kappa(x,y)=\lim_{\alpha\to 1}\frac{\kappa_\alpha(x,y)}{1-\alpha},\qquad\text{where}\quad \kappa_\alpha(x,y)=1-\frac{W_1(m_x^\alpha,m_y^\alpha)}{d(x,y)},
\]
and showed that the limit exists. If $x=v_0,v_1,\dots,v_k=y$ is a shortest path, the triangle inequality for $W_1$ gives $\kappa_\alpha(x,y)\ge \frac1k\sum_{i=1}^k\kappa_\alpha(v_{i-1},v_i)$. Hence, if $\kappa(x,y)>0$ on every edge $xy$ of a connected graph, then $\kappa(x,y)>0$ for all  $x,y\in V(G)$ with $x\ne y$. Consequently,  our hypothesis is the same as being \emph{positively curved} in the sense of~\cite{LuWang}.
We say that $G$ has \emph{positive Lin--Lu--Yau curvature} if 
 $\kappa(x,y)>0$ on every edge $xy$ in $G$. 

M\"unch and Wojciechowski~\cite{MW} gave a limit-free formula for $\kappa(x,y)$. For a function $f\colon V(G)\to\mathbb R$ let $\Delta f(v)=\frac{1}{d(v)}\sum_{u\in N(v)}(f(u)-f(v))$, and call $f$ \emph{$1$-Lipschitz} if $|f(u)-f(v)|\le 1$ for every edge $uv$ in $G$.

\begin{thm}[M\"unch and Wojciechowski~\cite{MW}]\label{thm:MW}
For every edge $xy\in E(G)$,
\[
  \kappa(x,y)=\inf\bigl\{\Delta f(x)-\Delta f(y) \mid  f \text{ is $1$-Lipschitz and } f(y)-f(x)=1\bigr\}.
\]
\end{thm}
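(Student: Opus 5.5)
The plan is to derive the formula from Kantorovich duality for the $\alpha$-lazy measures, rewrite $\kappa_\alpha(x,y)/(1-\alpha)$ as an infimum over $1$-Lipschitz functions, and then let $\alpha\to1$. The limit is known to exist by~\cite{LLY}, so it suffices to identify it.

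\textbf{Step 1: duality.} Fix the edge $xy$, so $d(x,y)=1$. Both $m_x^\alpha$ and $m_y^\alpha$ are supported on the finite set $B:=N[x]\cup N[y]$. Kantorovich--Rubinstein duality on this finite metric space (with the graph metric restricted to $B$) gives
\[
W(m_x^\alpha,m_y^\alpha)=\sup_f\Bigl(\sum_v f(v)m_y^\alpha(v)-\sum_v f(v)m_x^\alpha(v)\Bigr),
\]
where $f$ ranges over $1$-Lipschitz functions on $B$ with respect to $d$. Any such $f$ extends to a $1$-Lipschitz function on all of $G$ via the McShane extension $f(v)=\min_{u\in B}(f(u)+d(u,v))$. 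Moreover $\Delta f(x)$ and $\Delta f(y)$ depend only on $f|_B$, so we may equivalently take the supremum over $1$-Lipschitz functions on $G$.

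\textbf{Step 2: rewriting $\kappa_\alpha$.} A direct computation gives $\sum_v f(v)m_x^\alpha(v)=f(x)+(1-\alpha)\Delta f(x)$, and similarly at $y$. Hence
\[
\frac{\kappa_\alpha(x,y)}{1-\alpha}=\inf_f\Bigl(\frac{1-(f(y)-f(x))}{1-\alpha}+\Delta f(x)-\Delta f(y)\Bigr)=:\inf_f\Phi_\alpha(f).
\]
Because $f$ is $1$-Lipschitz, $f(y)-f(x)\le 1$, so the first term is always nonnegative.

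\textbf{Step 3: upper bound.} If $f$ is $1$-Lipschitz with $f(y)-f(x)=1$, the first term vanishes. Thus $\kappa_\alpha(x,y)/(1-\alpha)\le \Delta f(x)-\Delta f(y)$ for every $\alpha$, and letting $\alpha\to1$ gives $\kappa(x,y)\le\inf\{\cdots\}$. The feasible set is nonempty: for example $f=d(x,\cdot)$ is $1$-Lipschitz with $f(y)-f(x)=1$. So the infimum is finite and gives a uniform upper bound $M$ on $\inf_f\Phi_\alpha(f)$.

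\textbf{Step 4: lower bound (the main obstacle).} This is the real work, since the penalty term blows up as $\alpha\to1$ and we need compactness. I would normalize $f(x)=0$, which is allowed because $\Phi_\alpha$ is invariant under adding constants, and regard $f$ as a function on $B$.
\begin{itemize}
\item The set of $1$-Lipschitz $f\colon B\to\mathbb R$ with $f(x)=0$ is a compact subset of $\mathbb R^B$, and $\Phi_\alpha$ is continuous on it. So for a sequence $\alpha_n\to1$ there is a minimizer $f_n$.
\item Since $|\Delta f(v)|\le1$ for $1$-Lipschitz $f$, we have $\Delta f_n(x)-\Delta f_n(y)\ge-2$. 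Combined with $\Phi_{\alpha_n}(f_n)\le M$, this gives $1-(f_n(y)-f_n(x))\le (M+2)(1-\alpha_n)\to0$.
\item Passing to a subsequence, $f_n\to f^*$, where $f^*$ is $1$-Lipschitz on $B$ with $f^*(y)-f^*(x)=1$. Extend $f^*$ to $G$ by McShane.
\item Since the penalty term is nonnegative, $\kappa(x,y)=\lim_n\Phi_{\alpha_n}(f_n)\ge\limsup_n\bigl(\Delta f_n(x)-\Delta f_n(y)\bigr)=\Delta f^*(x)-\Delta f^*(y)$. The last quantity is at least the infimum in the statement.
\end{itemize}
Combining Steps 3 and 4 yields the claimed formula.
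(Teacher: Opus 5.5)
Your proof is correct and complete. The paper does not prove this statement at all; it quotes it from \cite{MW}, so there is no in-paper argument to match. What you give is a self-contained derivation: Kantorovich--Rubinstein duality on the finite set $B=N[x]\cup N[y]$, the identity $\sum_v f(v)m_x^\alpha(v)=f(x)+(1-\alpha)\Delta f(x)$, the easy bound from feasible $f$, and compactness of the normalized $1$-Lipschitz functions on $B$ for the reverse bound. The McShane extension correctly moves between functions on $B$ and functions on $G$.

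Write $I$ for the infimum in the statement. Two comments on what your route gives and what it does not.

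First, you do not need \cite{LLY} for existence of the limit. Step~3 bounds the $\limsup$ by $I$, and running Step~4 along a sequence $\alpha_n\to1$ that realizes the $\liminf$ bounds the $\liminf$ below by $I$.

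Second, compactness shows that $I$ is attained (by $f^*$), but it says nothing about the values of an optimizer, and the paper needs more. Lemma~\ref{lem:LW1} uses only your easy Step~3, but the argument behind Lemma~\ref{lem:LWlower} needs an \emph{integer-valued} optimizer. You get this by replacing compactness with a vertex argument:
\begin{enumerate}[(i)]
\item $\Phi_\alpha$ is linear in $f$, and $P=\{f\colon B\to\mathbb R\mid f(x)=0,\ f(u)-f(v)\le d(u,v)\ \text{for all } u,v\in B\}$ is a polytope defined by difference constraints with integer right-hand sides. By total unimodularity its vertices are integral.
\item At a vertex with $f(y)<1$ we have $f(y)\le 0$, so $\Phi_\alpha(f)\ge 1/(1-\alpha)-2>I$ once $\alpha$ is close to $1$.
\item The vertices with $f(y)=1$ are exactly the vertices of the face of $P$ on which $\Phi_\alpha(f)=\Delta f(x)-\Delta f(y)$, and the minimum of this linear function over that face is $I$.
\end{enumerate}
Hence $\kappa_\alpha(x,y)/(1-\alpha)=I$ for all $\alpha$ near $1$, and $I$ is attained by an integer-valued $f$. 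This gives at once the existence of the limit, its eventual constancy in $\alpha$, and the integrality used for Lemma~\ref{lem:LWlower}.
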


Adding a constant to $f$ does not change $\Delta f$, so in Theorem~\ref{thm:MW} we may always assume that $f(x)=0$ and $f(y)=1$. The key lemma of Lu and Wang~\cite[Lemma~1]{LuWang} is stated there for edges $xy$ with $d(x)\ge d(y)$, but its proof does not use this assumption. We state it in the form that we use and include the short proof for completeness.

\begin{lem}[Lu and Wang~\cite{LuWang}]\label{lem:LW1}
Let $xy$ be an edge in a locally finite graph $G$ with $\kappa(x,y)>0$, and let $S$ be a nonempty subset of $N(y)\setminus\{x\}$. Then
\[
  \bigl|N(x)\setminus (N[S]\cup N(y))\bigr| < |S\cap N(x)|+1+d(x)\bigl( 1-|S|/d(y)\bigr).
\]
\end{lem}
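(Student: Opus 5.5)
We apply Theorem~\ref{thm:MW} with one explicit $1$-Lipschitz test function $f$ satisfying $f(x)=0$ and $f(y)=1$. We compute $\Delta f(x)-\Delta f(y)$ and use $\kappa(x,y)>0$ to obtain $\Delta f(x)>\Delta f(y)$. Clearing denominators then gives the stated inequality. Write $A:=N(x)\setminus(N[S]\cup N(y))$ and $s:=|S|$.

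\emph{Construction of $f$.} Set $f(x)=0$ and $f(y)=1$. Set $f=2$ on $S$. Set $f=-1$ on $A$. Every other vertex of $N(x)$ gets value $0$ or $1$: value $1$ if it lies in $N[S]$ (including vertices of $S\cap N(x)$, which already have $f=2$), and value $0$ otherwise. Remaining vertices of $N(y)$ get value $1$. Away from $N[x]\cup N[y]$, extend $f$ in a $1$-Lipschitz way, for instance by $f(v)=\min\{2,\,-1+d(v,A)\}$ clipped appropriately. Equivalently, take the McShane-type extension $f(v)=\min_u\{f(u)+d(u,v)\}$ over the prescribed vertices, after checking that the prescribed values are $1$-Lipschitz on the prescribed set.

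\emph{Lipschitz check.} Vertices of $A$ have value $-1$ and are adjacent only to vertices with value at most $0$ among the prescribed ones. Indeed, $A$ avoids $N[S]$, so no vertex of $A$ is adjacent to $S$. A vertex of $A$ is not adjacent to $y$, since it is not in $N(y)$. Its neighbours among $x$ and the $0$-valued vertices of $N(x)$ are fine. A vertex of $A$ might be adjacent to a $1$-valued vertex of $N(x)\cap N[S]$, which would violate the condition. This is the main obstacle. To handle it, I would assign the intermediate values more carefully: vertices of $N(x)\cap N(S)$ outside $N(y)\cup S$ get value $1$, and vertices of $A$ adjacent to such a vertex get value $0$ instead of $-1$. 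Alternatively, I would reverse the roles and put $-1$ only where it is safe. The loss from such adjustments is what the ``$+|S\cap N(x)|+1$'' slack in the statement is meant to absorb.

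\emph{Computing the Laplacians.} At $x$, the neighbours contribute $+2$ on $S\cap N(x)$, $+1$ at $y$ and on $N(x)\cap N(S)$, and $-1$ on $A$. This gives
\[
d(x)\Delta f(x)\le 2|S\cap N(x)|+1+\bigl(d(x)-1-|A|-|S\cap N(x)|\bigr)-|A|.
\]
At $y$, the vertices of $S$ contribute $f-1=+1$ each, $x$ contributes $-1$, and the other neighbours contribute $0$. This gives
\[
d(y)\Delta f(y)=s-1.
\]
Positivity $\Delta f(x)>\Delta f(y)$ then yields, after rearranging, $|A|<|S\cap N(x)|+1+d(x)(1-s/d(y))$, up to the bookkeeping of the constants. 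In the write-up I would first fix exactly which vertices of $N(x)$ receive $-1$, so that the Lipschitz condition holds and the count closes with precisely the stated constants.
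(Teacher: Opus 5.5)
Your proposal has a genuine gap: the test function you prescribe is not $1$-Lipschitz, and this cannot be fixed by bookkeeping.

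First, setting $f=2$ on $S$ while $f(x)=0$ violates the condition on every edge from $x$ to $S\cap N(x)$. The $+2$ contributions of $S\cap N(x)$ in your formula for $d(x)\Delta f(x)$ come precisely from these illegal edges. Second, values $2$ on $S$ and $-1$ on $A$ can coexist only if every vertex of $S$ is at distance at least $3$ from every vertex of $A$, which your McShane extension requires. The definition of $A$ only guarantees distance at least $2$. Third, the value $1$ you give to $N(y)\setminus(S\cup\{x\})$ and to $N(x)\cap N(S)$ may sit next to a vertex of $A$. The condition $A\cap N(y)=\emptyset$ does not stop $A$ from being adjacent to $N(y)$.

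The cycle $C_5=v_0v_1v_2v_3v_4v_0$, with $x=v_0$, $y=v_1$ and $S=\{v_2\}$, shows that no adjustment of constants can rescue this. Here $A=\{v_4\}$. Your prescription $f(v_2)=2$, $f(v_4)=-1$ is impossible because $v_2v_3v_4$ is a path. Formally, your values give $\Delta f(x)=\Delta f(y)=0$, which would force $\kappa(x,y)\le 0$. But every edge of $C_5$ has curvature $1/2$. Your suggested repair, switching some vertices of $A$ from $-1$ to $0$, is not carried out, and it would in any case destroy the $-|A|$ term you need.

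The missing idea is to use only the values $\{-1,0,1\}$, prescribed on all of $V(G)$ so that no extension is needed. Set $f=1$ on $S\cup\{y\}$, $f=-1$ on $A$, and $f=0$ on every other vertex. The definition $A=N(x)\setminus(N[S]\cup N(y))$ says exactly that no vertex of $A$ lies in or is adjacent to $S\cup\{y\}$, and that is what makes $f$ $1$-Lipschitz. Note also that $y\notin A$, because $S\neq\emptyset$ gives $y\in N(S)$. Moreover $x$ lies in none of the three sets, so $f(x)=0$.

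Since $N(y)\cap A=\emptyset$, every neighbor of $y$ outside $S$, including $x$, has value $0$. Hence $d(y)\Delta f(y)=-(d(y)-|S|)$, and $d(x)\Delta f(x)=|S\cap N(x)|+1-|A|$ exactly. Then $0<\kappa(x,y)\le\Delta f(x)-\Delta f(y)$, by Theorem~\ref{thm:MW}, rearranges to precisely the stated inequality. In particular, $|S\cap N(x)|+1$ is not slack for absorbing repairs. It is the exact contribution of $y$ and $S\cap N(x)$ to $d(x)\Delta f(x)$, and $d(x)\bigl(1-|S|/d(y)\bigr)$ is $-d(x)\Delta f(y)$.
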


\begin{proof}
Let $R:=N(x)\setminus(N[S]\cup N(y))$. Then no vertex in $R$ is adjacent to a vertex in $S\cup\{y\}$ in $G$. Since $S\neq\emptyset$ and $S\subseteq N(y)$, we have $y\in N(S)$.  Then  $y\notin R$. Define $f(v)=1$ for $v\in S\cup\{y\}$, $f(v)=-1$ for $v\in R$, and $f(v)=0$ otherwise.  It can be easily checked that  $f$ is $1$-Lipschitz; moreover $f(x)=0$ and  $f(y)-f(x)=1$. Note that every neighbor of $y$ other than $x$ either lies in $S$ or has value $0$. Therefore, 
\[\Delta f(y)=\frac{-(d(y)-|S|)}{d(y)} \quad\text{ and }\quad\Delta f(x)=\frac{|S\cap N(x)|+1-|R|}{d(x)}.\] By Theorem~\ref{thm:MW},
\[
0< \kappa(x,y)\le \Delta f(x)-\Delta f(y)=\frac{|S\cap N(x)|+1-|R|}{d(x)}+\frac{d(y)-|S|}{d(y)},
\]
which implies that $|R| < |S\cap N(x)|+1+d(x)\bigl( 1- |S|/d(y)\bigr)$, as desired.
\end{proof}

The next two results turn a bound on the maximum degree into a bound on the order, as in~\cite{LuWang}. The first is a lemma of Lu and Wang.

\begin{lem}[Lu and Wang~\cite{LuWang}]\label{lem:LWlower}
Let $G$ be a   locally finite graph with  positive Lin--Lu--Yau curvature and maximum degree $\Delta\ge2$.   Then $\kappa(x,y)\ge 1/(\Delta(\Delta-1))$ for every edge $xy\in E(G)$.
\end{lem}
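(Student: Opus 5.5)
We want to show $\kappa(x,y)\ge 1/(\Delta(\Delta-1))$ for every edge $xy$. We would use the Münch–Wojciechowski formula (Theorem~\ref{thm:MW}) together with a discreteness argument.

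First we reduce to a finite optimization. Since $\Delta f(x)-\Delta f(y)$ depends only on the values of $f$ on the finite set $B:=N[x]\cup N[y]$, the infimum may be taken over $1$-Lipschitz functions on the subgraph induced by $B$ with $f(x)=0$ and $f(y)=1$. Every such function extends to a $1$-Lipschitz function on all of $G$, for instance by the McShane-type extension $\tilde f(v)=\min_{b\in B}(f(b)+d(v,b))$, working componentwise. Conversely, any restriction of a global $1$-Lipschitz function is admissible. Hence $\kappa(x,y)$ is the minimum of the linear functional
\[
L(f)=\frac{1}{d(x)}\sum_{u\in N(x)}f(u)-\frac{1}{d(y)}\sum_{v\in N(y)}f(v)+1
\]
over a bounded polytope, namely $|f(v)|\le 2$ on $B$ with the Lipschitz constraints. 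So the minimum is attained at a vertex of that polytope.

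The key step is integrality. The constraint matrix consists of difference constraints $f(u)-f(v)\le 1$ together with the fixings $f(x)=0$ and $f(y)=1$. This matrix is totally unimodular and the right-hand sides are integers, so every vertex of the polytope is integral. Alternatively, one can argue directly that an optimal $f$ may be taken with values in $\{-1,0,1,2\}$: round the fractional parts level by level, since $L$ is linear along the shifts of any fractional level set. For integer $f$, the value $\Delta f(x)-\Delta f(y)$ equals $a/d(x)-b/d(y)$ with $a,b$ integers. Hence $\kappa(x,y)=p/(d(x)d(y))$ for some integer $p$. Positivity gives $p\ge 1$, so
\[
\kappa(x,y)\ge \frac{1}{d(x)d(y)}.
\]

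It remains to improve $1/(d(x)d(y))\ge 1/\Delta^2$ to $1/(\Delta(\Delta-1))$. If $d(x)\ne d(y)$, then $d(x)d(y)\le \Delta(\Delta-1)$ and we are done. If $d(x)=d(y)=d$, then $\kappa=p/d$, so $\kappa\ge 1/d\ge 1/(\Delta(\Delta-1))$ whenever $\Delta\ge 2$. The case $d=1$ needs care: it means $x$ or $y$ is a leaf, and then $\kappa=p/d(\cdot)$ with the denominator equal to the other degree, which is at most $\Delta$. In all cases the bound holds.

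The main obstacle is the integrality/rounding step: justifying rigorously that the infimum in Theorem~\ref{thm:MW} is attained at an integer-valued $f$, including the extension argument for locally finite graphs. We would handle this by invoking total unimodularity of difference-constraint systems, which makes the polytope on the finite ball $B$ integral.
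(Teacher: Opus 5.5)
Your route is the same as the one the paper sketches, following Lu and Wang: the infimum in Theorem~\ref{thm:MW} is attained by an integer-valued $f$, so $\kappa(x,y)$ is a positive rational with denominator $d(x)d(y)$, or $d$ when $d(x)=d(y)=d$, and a case split on the degrees finishes. Your total-unimodularity argument is a good way to supply the integrality fact, which the paper only cites. However, the finite reduction fails as you state it.

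\textbf{The gap.} A function that is $1$-Lipschitz on the induced subgraph $G[B]$, where $B=N[x]\cup N[y]$, need not extend to a $1$-Lipschitz function on $G$. Two vertices $u\in N(x)$ and $v\in N(y)$ can have $d_G(u,v)=2$ through a common neighbour outside $B$ while $d_{G[B]}(u,v)=3$. In that case your McShane formula does not agree with $f$ on $B$.

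Concretely, take $C_5$ with vertices $u,x,y,v,w$ in cyclic order. Then $G[B]$ is the path $uxyv$. The function with values $(-1,0,1,2)$ on $(u,x,y,v)$ is admissible for your relaxed problem and gives $L(f)=0$, but $\kappa(x,y)=1/2$. So the relaxed minimum can be strictly smaller than $\kappa(x,y)$, and in particular need not be positive. Positivity of the optimum is exactly what your argument needs.

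\textbf{The repair.} On $B$, impose $f(a)-f(b)\le d_G(a,b)$ for all $a,b\in B$; for distinct $a,b$ these distances lie in $\{1,2,3\}$. The feasible set is then exactly the set of restrictions of global $1$-Lipschitz functions. To extend, use McShane with $d_G$ on the component containing $x$ and set $f=0$ on the other components. The constraints still form a difference-constraint system with integer right-hand sides, and the polytope is bounded because $|f(a)|\le d_G(a,x)\le 2$. Total unimodularity therefore still gives an integral optimal vertex, and the rest of your argument goes through unchanged.

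\textbf{A minor point.} Your remark about $d=1$ is muddled but harmless. If $d(x)=d(y)=d$, then $\kappa=p/d\ge 1/d$ for every $d\ge1$; when $d=1$ the edge is an isolated $K_2$. An edge with exactly one leaf endpoint falls into the case $d(x)\ne d(y)$, where $d(x)d(y)\le\Delta(\Delta-1)$ already suffices.
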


The proof in~\cite{LuWang} uses the fact that the infimum in Theorem~\ref{thm:MW} is attained by an integer-valued function (see~\cite{BM,BCLMP}). Hence $\kappa(x,y)$ is a positive rational number whose denominator divides $\operatorname{lcm}(d(x),d(y))$, and $\operatorname{lcm}(d(x),d(y))$ is at most $\Delta$ if $d(x)=d(y)$ and at most $\Delta(\Delta-1)$ otherwise.

The second is the discrete Bonnet--Myers theorem of Lin, Lu and Yau.

\begin{thm}[Lin, Lu and Yau~\cite{LLY}]\label{thm:BonnetMyers}
Let $G$ be a  connected  locally finite graph and let $\kappa_0>0$. If $\kappa(x,y)\ge\kappa_0$ for every edge $xy\in E(G)$, then $\diam(G)\le 2/\kappa_0$.
\end{thm}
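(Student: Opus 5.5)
The plan is to combine Lemma~\ref{lem:LWlower} with Theorem~\ref{thm:BonnetMyers}, so that a bound on the diameter becomes a bound on the order. Here the statement is the discrete Bonnet--Myers theorem itself, and I will prove it directly from the Münch--Wojciechowski formula (Theorem~\ref{thm:MW}) together with the path triangle inequality noted above.

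\textbf{Step 1 (curvature along pairs).} Fix $u,v$ with $d(u,v)=L\ge 1$ and a shortest path $u=v_0,\dots,v_L=v$. By the triangle inequality for $W_1$ mentioned in Section~\ref{sec:curv}, we get $\kappa_\alpha(u,v)\ge \frac1L\sum_i\kappa_\alpha(v_{i-1},v_i)$. Dividing by $1-\alpha$ and letting $\alpha\to1$ gives $\kappa(u,v)\ge\kappa_0$, where $\kappa(u,v)$ denotes the same limit for non-adjacent pairs.

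\textbf{Step 2 (upper bound on $\kappa(u,v)$).} Directly from the definition I will show that
\[
W_1(m_u^\alpha,m_v^\alpha)\ge L-2(1-\alpha)
\]
for $L\ge 2$ or so. For this I use the $1$-Lipschitz test function $f=d(u,\cdot)$ and Kantorovich duality:
\[
W_1\ge \sum f\,m_v^\alpha-\sum f\,m_u^\alpha .
\]
Under $m_v^\alpha$, the mass $\alpha$ sits at distance $L$ from $u$, and every neighbour of $v$ is at distance at least $L-1$; hence $\sum f\,m_v^\alpha\ge \alpha L+(1-\alpha)(L-1)$. Under $m_u^\alpha$, the mass $\alpha$ sits at $u$ itself and the neighbours are at distance $1$; hence $\sum f\,m_u^\alpha=1-\alpha$. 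Subtracting,
\[
W_1\ge L-2(1-\alpha).
\]
Therefore
\[
\kappa_\alpha(u,v)=1-\frac{W_1}{L}\le \frac{2(1-\alpha)}{L},
\]
so $\kappa(u,v)\le 2/L$.

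\textbf{Step 3 (conclusion).} Combining Steps 1 and 2 gives $\kappa_0\le 2/L$, that is, $L\le 2/\kappa_0$. Since $u,v$ were arbitrary, $\diam(G)\le 2/\kappa_0$.

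The main point to verify is that the limit defining $\kappa(u,v)$ exists for non-adjacent pairs, or else that the argument can avoid this. To sidestep it, I will work with $\liminf$ and $\limsup$ of $\kappa_\alpha/(1-\alpha)$: Step 1 gives a lower bound on the $\liminf$, using the existence of the limits on edges, and Step 2 gives a pointwise upper bound for each $\alpha$. No existence claim is then needed. I also need the duality inequality $W_1\ge\int f\,d(m_v-m_u)$ for $1$-Lipschitz $f$. This follows immediately from any coupling $A$, since
\[
\sum_{a,b}A(a,b)\bigl(f(b)-f(a)\bigr)\le\sum_{a,b} A(a,b)\,d(a,b).
\]
Local finiteness ensures all supports are finite, so every sum involved is finite.
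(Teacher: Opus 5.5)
Your proof is correct. The paper does not prove this theorem; it only cites Lin, Lu and Yau, and your argument is essentially their original one. The path inequality recorded in Section~\ref{sec:curv} yields $\liminf_{\alpha\to1}\kappa_\alpha(u,v)/(1-\alpha)\ge\kappa_0$. The bound $W(m_u^\alpha,m_v^\alpha)\ge d(u,v)-2(1-\alpha)$ yields $\kappa_\alpha(u,v)/(1-\alpha)\le 2/d(u,v)$ for every $\alpha$. You obtain that bound from the test function $d(u,\cdot)$; equivalently, it follows from the triangle inequality for $W$ through the point masses at $u$ and $v$, each at distance $1-\alpha$ from $m_u^\alpha$ and $m_v^\alpha$. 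Your $\liminf$ device neatly avoids needing the limit to exist for non-adjacent pairs.

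Two cosmetic points. First, your opening sentences misdescribe the plan: the argument uses neither Lemma~\ref{lem:LWlower} nor Theorem~\ref{thm:MW}. Second, drop the hedge ``for $L\ge 2$ or so''. Step~2 holds verbatim for $L=1$, since every neighbor of $v$ is at distance at least $0=L-1$ from $u$, and that case is genuinely needed to handle complete graphs.
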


Combining them as in the proof of  \cite[Theorem 5]{LuWang}, where $\Delta=17$, gives the following bound.
\begin{lem}\label{lem:order}
Let $G$ be a connected locally finite graph with positive Lin--Lu--Yau curvature and maximum degree $\Delta\ge3$. Then $\diam(G)\le 2\Delta(\Delta-1)$ and $|V(G)|\le \Delta^{2\Delta(\Delta-1)}$.
\end{lem}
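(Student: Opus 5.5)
The plan is to combine Lemma~\ref{lem:LWlower} with the discrete Bonnet--Myers theorem (Theorem~\ref{thm:BonnetMyers}) for the diameter, and then bound the order by a breadth-first count from any vertex.

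\emph{Diameter.} Since $\Delta\ge 3\ge 2$, Lemma~\ref{lem:LWlower} gives $\kappa(x,y)\ge 1/(\Delta(\Delta-1))$ for every edge $xy$. Applying Theorem~\ref{thm:BonnetMyers} with $\kappa_0:=1/(\Delta(\Delta-1))>0$ yields
\[
\diam(G)\le 2/\kappa_0=2\Delta(\Delta-1).
\]

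\emph{Order.} Set $k:=2\Delta(\Delta-1)$ and fix a vertex $v$. By connectivity, every vertex lies at distance at most $k$ from $v$. The ball of radius $r$ around $v$ has at most
\[
1+\Delta\sum_{i=0}^{r-1}(\Delta-1)^i
\]
vertices, because $v$ has at most $\Delta$ neighbours and every later vertex has at most $\Delta-1$ neighbours further away. It therefore suffices to show
\[
1+\Delta\sum_{i=0}^{k-1}(\Delta-1)^i\le\Delta^k .
\]
For $\Delta\ge3$ the left side equals
\[
1+\frac{\Delta\bigl((\Delta-1)^k-1\bigr)}{\Delta-2}.
\]
Here I would just compare crudely. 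Each sphere of radius $i\ge1$ has at most $\Delta(\Delta-1)^{i-1}\le\Delta^i$ vertices. Summing gives
\[
|V(G)|\le\sum_{i=0}^k\Delta^i=\frac{\Delta^{k+1}-1}{\Delta-1},
\]
which is slightly larger than $\Delta^k$, so a sharper comparison is needed. The sharper count is $|V(G)|\le 1+\Delta\sum_{i=0}^{k-1}(\Delta-1)^i$. I would prove this is at most $\Delta^k$ by induction on $k\ge1$, using the recursion
\[
S_k=S_{k-1}+\Delta(\Delta-1)^{k-1}\le\Delta^{k-1}+\Delta(\Delta-1)^{k-1}.
\]
It then remains to check $\Delta(\Delta-1)^{k-1}\le\Delta^k-\Delta^{k-1}=\Delta^{k-1}(\Delta-1)$. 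Equivalently, $\Delta(\Delta-1)^{k-2}\le\Delta^{k-1}$, i.e.\ $(\Delta-1)^{k-2}\le\Delta^{k-2}$, which holds for $k\ge2$. The base case $k=1$ gives $1+\Delta\le\Delta$, which fails, so I would start the induction at $k=2$: there $S_2=1+\Delta+\Delta(\Delta-1)=1+\Delta^2>\Delta^2$, which also fails.

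So the correct route is a bound of the form $S_k\le\Delta^k$ for $k$ large. Since $k=2\Delta(\Delta-1)\ge12$, I would instead verify directly
\[
1+\frac{\Delta}{\Delta-2}(\Delta-1)^k\le\Delta^k,
\]
i.e.\ that $\bigl(\frac{\Delta-1}{\Delta}\bigr)^k\frac{\Delta}{\Delta-2}+\Delta^{-k}\le1$. For $\Delta\ge3$ the factor $\frac{\Delta}{\Delta-2}\le3$, and $(1-1/\Delta)^{2\Delta(\Delta-1)}\le e^{-2(\Delta-1)}\le e^{-4}$. This makes the left side at most $3e^{-4}+3^{-12}<1$.

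This routine numerical comparison is the only slightly delicate step. The main content is the invocation of Lemma~\ref{lem:LWlower} together with Theorem~\ref{thm:BonnetMyers}.
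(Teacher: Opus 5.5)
Your proposal is correct and follows the paper's proof. You use Lemma~\ref{lem:LWlower} and Theorem~\ref{thm:BonnetMyers} for the diameter, then the sphere count $1+\frac{\Delta}{\Delta-2}\bigl((\Delta-1)^k-1\bigr)$ with $k=2\Delta(\Delta-1)$ and $\frac{\Delta}{\Delta-2}\le 3$. The final numerical comparison differs only cosmetically: you use $1-1/\Delta\le e^{-1/\Delta}$, whereas the paper uses Bernoulli's inequality $\bigl(\frac{\Delta}{\Delta-1}\bigr)^{k}\ge 1+2\Delta>3$. The abandoned induction attempts should simply be deleted.
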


\begin{proof}
By Lemma~\ref{lem:LWlower} and Theorem~\ref{thm:BonnetMyers}, we have  $\kappa(x,y)\ge 1/(\Delta(\Delta-1))$ for every edge $xy\in E(G)$, and  $\diam(G)\le 2\Delta(\Delta-1)$.  Note that  a vertex in $G$ has at most $\Delta(\Delta-1)^{k-1}$ vertices at distance $k$ and $\Delta\ge3$. Therefore,  
\begin{align*}
  |V(G)|&\le 1+\sum_{k=1}^{\diam(G)}\Delta(\Delta-1)^{k-1}\\
  &=1+\frac{\Delta}{\Delta-2}\bigl((\Delta-1)^{\diam(G)}-1\bigr)\\
  &\le 1+3\bigl((\Delta-1)^{\diam(G)}-1\bigr)\\
  &<3(\Delta-1)^{2\Delta(\Delta-1)}\\
  & \le \Delta^{2\Delta(\Delta-1)},
\end{align*}
where the last step uses  $\bigl(\frac{\Delta}{\Delta-1}\bigr)^{2\Delta(\Delta-1)}=\bigl(1+\frac{1}{\Delta-1}\bigr)^{2\Delta(\Delta-1)}\ge 1+\frac{2\Delta(\Delta-1)}{\Delta-1}=1+2\Delta>3$.
\end{proof}

\subsection{Treewidth and the second smallest eigenvalues}\label{sec:tw}
To prove the finiteness of the graphs in Theorem~\ref{thm:main}, Lemma~\ref{lem:order} suffices. Following the ideas of Bai, Liu and the present author~\cite[Section~5]{BLS}, we use a result of Alon, Seymour and Thomas~\cite{AST} on the treewidth of graphs to obtain a better bound on $|V(G)|$ in Theorem~\ref{thm:main}.

Let $G$ be a finite graph with no isolated vertices, let $A$ be its adjacency matrix, and let $D$ be the diagonal matrix whose diagonal entries are the degrees of the vertices of $G$. The \emph{Laplacian} of $G$ is $L=D-A$, and the \emph{normalized Laplacian} of $G$ is $\mathcal L=I-D^{-1/2}AD^{-1/2}$. Both matrices are symmetric and positive semidefinite, and $0$ is an eigenvalue of both. We denote by $\lambda_2(G)$ and $\mu_2(G)$ the second smallest eigenvalues of $L$ and $\mathcal L$, respectively, counted with multiplicity. The matrix $\mathcal L=D^{1/2}(I-D^{-1}A)D^{-1/2}$ is similar to $I-D^{-1}A$, which is the matrix of the operator $f\mapsto-\Delta f$, where $\Delta f$ is as in Theorem~\ref{thm:MW}. Lin, Lu and Yau proved the following analogue of the Lichnerowicz eigenvalue estimate.

\begin{thm}[Lin, Lu and Yau~\cite{LLY}]\label{thm:LLYeig}
Let $G$ be a  connected graph with at least two vertices. Then $\mu_2(G)\ge\min_{xy\in E(G)}\kappa(x,y)$.
\end{thm}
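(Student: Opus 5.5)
We plan to derive the bound directly from the limit-free formula of Theorem~\ref{thm:MW}, applied to a rescaled eigenfunction. This avoids working with the lazy random walks and the limit $\alpha\to1$.

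First we set up the eigenfunction. As noted above, $\mathcal L$ is similar to $I-D^{-1}A$, which is the matrix of $f\mapsto -\Delta f$. Since similar matrices have the same spectrum, $\mu_2:=\mu_2(G)$ is an eigenvalue of $-\Delta$. So there is a nonzero real function $f\colon V(G)\to\mathbb R$ with $\Delta f=-\mu_2 f$.

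Next we show that $f$ is not constant, so that it has a positive Lipschitz constant. Because $G$ is connected, the eigenvalue $0$ of $\mathcal L$ is simple, with eigenvector $D^{1/2}\mathbf 1$. Hence we may choose $f$ so that $D^{1/2}f$ is orthogonal to $D^{1/2}\mathbf 1$, that is, $\sum_v d(v)f(v)=0$. A nonzero function with this property cannot be constant. (If $\mu_2=0$ there is nothing to prove; otherwise $\mu_2>0$ and nonconstancy follows from $\Delta f=-\mu_2 f\ne 0$ anyway.) Let
\[
L:=\max_{uv\in E(G)}|f(u)-f(v)|.
\]
This maximum exists because $G$ is finite, and $L>0$ because $f$ is nonconstant and $G$ is connected.

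Now we apply Theorem~\ref{thm:MW}. Choose an edge $xy$ attaining the maximum, oriented so that $f(y)-f(x)=L$. The function $g:=f/L$ is $1$-Lipschitz and satisfies $g(y)-g(x)=1$, so it is admissible in Theorem~\ref{thm:MW}. This gives
\[
\kappa(x,y)\le \Delta g(x)-\Delta g(y)=\frac{-\mu_2 f(x)+\mu_2 f(y)}{L}=\mu_2.
\]
Therefore $\mu_2(G)\ge\kappa(x,y)\ge\min_{uv\in E(G)}\kappa(u,v)$.

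The main point needing care is the choice of $f$ and the nonconstancy step: we must make sure the eigenvector of $-\Delta$ for $\mu_2$ is genuinely nonconstant, which is where connectedness enters, so that $L>0$ and the normalization is legitimate. Everything else is a direct substitution into Theorem~\ref{thm:MW}.
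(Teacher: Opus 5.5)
Your proof is correct. It is not the route behind the paper's citation, because the paper gives no proof and quotes the result from~\cite{LLY}. There it is derived, as far as I recall, by Ollivier's Lipschitz-contraction argument for the lazy walks. For $M_\alpha=\alpha I+(1-\alpha)D^{-1}A$ and every edge $uv$ one has $|M_\alpha f(u)-M_\alpha f(v)|\le W(m_u^\alpha,m_v^\alpha)\max_{ab\in E(G)}|f(a)-f(b)|$. Since $M_\alpha f=(1-(1-\alpha)\mu_2)f$ for an eigenfunction $f$, this gives $(1-\alpha)\mu_2\ge\min_{uv\in E(G)}\kappa_\alpha(u,v)$, and one then divides by $1-\alpha$ and lets $\alpha\to1$.

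Your argument is the infinitesimal form of the same mechanism. Plugging $g=f/L$ into Theorem~\ref{thm:MW} at an edge of maximal gradient removes the lazy walks, the limit, and the interchange of the limit with the minimum over edges. It also identifies a specific edge with $\kappa(x,y)\le\mu_2$.

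Note that you only use the inequality $\kappa(x,y)\le\Delta g(x)-\Delta g(y)$ for one admissible $g$, which is the elementary half of Theorem~\ref{thm:MW}. Weak duality gives $W(m_x^\alpha,m_y^\alpha)\ge\sum_v g(v)\bigl(m_y^\alpha(v)-m_x^\alpha(v)\bigr)=1-(1-\alpha)\bigl(\Delta g(x)-\Delta g(y)\bigr)$, and one lets $\alpha\to1$. So nothing deep about the M\"unch--Wojciechowski formula is needed. In exchange, the Ollivier-style formulation gives a bound at every fixed laziness $\alpha$ and applies to general random walks.

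One small slip: the parenthetical ``if $\mu_2=0$ there is nothing to prove'' is wrong as stated, since that case would require $\min\kappa\le 0$. It is harmless. Connectedness makes $0$ a simple eigenvalue, so $\mu_2>0$, and your choice of $f$ with $\sum_v d(v)f(v)=0$ gives nonconstancy in any case.
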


Since $L=D^{1/2}\mathcal LD^{1/2}$, a theorem of Ostrowski~\cite{Ostrowski} on congruent matrices shows that $\lambda_2(G)=\theta\mu_2(G)$ for some $\theta$ between the smallest and the largest degree of $G$. In particular,
\[
  \lambda_2(G)\ge \delta(G)\,\mu_2(G).
\]

A \emph{tree-decomposition} of $G$ is a pair $(T,\mathcal W)$, where $T$ is a tree and $\mathcal W=(W_s: s\in V(T))$ is a family of subsets of $V(G)$, such that $\bigcup_{s\in V(T)}W_s=V(G)$, every edge of $G$ has both ends in some $W_s$, and $W_{s_1}\cap W_{s_3}\subseteq W_{s_2}$ whenever $s_2$ lies on the path of $T$ between $s_1$ and $s_3$. The \emph{treewidth} $\operatorname{tw}(G)$ of $G$ is the minimum $k$ such that $G$ has a tree-decomposition $(T,\mathcal W)$ with $|W_s|\le k+1$ for every $s\in V(T)$. We use the following two results.

\begin{thm}[Gima, Hanaka, Noro, Ono and Otachi~\cite{GHNOO}]\label{thm:GHNOO}
Let $G$ be a   graph with $n$ vertices and $\Delta(G)\ge 1$. Then
\[
  \operatorname{tw}(G)\ge\frac{n\lambda_2(G)}{\Delta(G)+\lambda_2(G)}-1 .
\]
\end{thm}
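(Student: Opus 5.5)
The plan is to use the standard link between treewidth and balanced separators, and then bound $\lambda_2(G)$ from above by a Rayleigh quotient built from such a separator. Write $k=\operatorname{tw}(G)$, $\Delta=\Delta(G)$ and $s=k+1$. The claimed inequality is equivalent to
\[
\lambda_2(G)\le \frac{\Delta\, s'}{n-s'}\quad\text{for some } s'\le s,
\]
since $\lambda_2(n-s')\le \Delta s'$ rearranges to $s'\ge n\lambda_2/(\Delta+\lambda_2)$, and then $k\ge s'-1$ gives the statement. If $n\le s$ there is nothing to prove, because $n\lambda_2/(\Delta+\lambda_2)<n$. So we may assume $n>s$.

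\textbf{Step 1 (separator).} Take a tree-decomposition of width $k$. By the usual centroid argument, walking along the tree toward the side containing more than half of the remaining vertices, some bag $S=W_s$ has the property that every component of $G-S$ has at most $(n-|S|)/2$ vertices. Let $C_1,\dots,C_m$ be these components, and note that $|S|\le s$.

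\textbf{Step 2 (test function).} I would use Fiedler's characterization
\[
\lambda_2=n\min_{f\ \text{nonconstant}}\frac{\sum_{uv\in E}(f(u)-f(v))^2}{\sum_{\{u,v\}}(f(u)-f(v))^2}.
\]
This avoids having to impose orthogonality to constants. Put $f\equiv 0$ on $S$ and $f\equiv c_i$ on $C_i$. There are no edges between different components, so only edges between $S$ and the $C_i$ contribute to the numerator. With $|c_i|\le1$ the numerator is at most $e(S,V\setminus S)\le \Delta|S|$. The denominator equals
\[
\sum_{i<j}|C_i||C_j|(c_i-c_j)^2+|S|\sum_i|C_i|c_i^2 .
\]

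\textbf{Step 3 (choice of the $c_i$).} Group the components into two sides $A,B$ and set $c_i=1$ on $A$, $c_i=-1$ on $B$. The denominator then becomes $4|A||B|+|S|(n-|S|)$. The target $\lambda_2\le\Delta|S|/(n-|S|)$ therefore requires
\[
4|A||B|\ge (n-|S|)^2,
\]
which holds only for an exactly balanced split. This is the step I expect to be the main obstacle.

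\textbf{How I would try to close the gap.} First, instead of a bare sign choice, I would optimise $f$ over $f=a$ on $A$, $f=-b$ on $B$ (together with the free value on $S$), where $A,B$ is a most-balanced grouping. The component-size bound $\le (n-|S|)/2$ makes a near-balanced grouping possible. I would then bound $e(A,S)a^2+e(B,S)b^2$ using $e(A,S)+e(B,S)\le\Delta|S|$. Second, failing that, I would move part of the separator into the smaller side, or choose the bag via a weighted centroid, so that the split becomes exactly balanced. This costs only a smaller effective $s'\le s$ in the final inequality, which is harmless.

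If exact balance cannot be achieved, this argument still gives $\operatorname{tw}(G)\ge c\,n\lambda_2/(\Delta+\lambda_2)-1$ for an absolute constant $c$. That weaker form would suffice for the polynomial order bound of Theorem~\ref{thm:main}, with adjusted constants.
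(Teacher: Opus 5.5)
\textbf{There is a genuine gap at Step~3.} Your reduction is sound: it suffices to find $S$ with $|S|\le k+1$ and $\lambda_2(G)\le \Delta|S|/(n-|S|)$. Your computation also correctly shows that a $\pm1$ function on a grouping $A\cup B$ of the components of $G-S$ gives this only when $|A|=|B|$. But that is the entire difficulty of the theorem, and none of your proposed repairs closes it.

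\begin{itemize}
\item Exact balance can be impossible for every admissible $S$. Take the tree obtained from $K_{1,3}$ by subdividing each edge once, so $k=1$. No $S$ with $|S|\le 2$ leaves components that split into two groups of equal order.
\item Moving vertices of $S$ to the smaller side destroys the separation, since such vertices typically have neighbours in $B$.
\item Optimising the values on a balanced grouping while using only $e(A,S)+e(B,S)\le\Delta|S|$ cannot work either. How the edges to $S$ are distributed between $A$ and $B$ is exactly what matters. Suppose $|S|=k$, $G-S$ has three components of order $k$, $A$ is one of them, and $e(A,S)=e(B,S)=E$. Then the best bound obtainable from functions constant on each of $A$, $B$ and $S$ is $\lambda_2\le\frac{7-\sqrt{17}}{4}\cdot\frac{E}{k}$. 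This exceeds $e(S,V\setminus S)/(n-|S|)=\frac{2E}{3k}$, which equals $\Delta|S|/(n-|S|)$ when $S$ is independent and all its vertices have degree $\Delta$.
\item The constant-factor fallback proves a weaker inequality, not this theorem.
\end{itemize}

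A smaller issue: Step~1 is false for bags of an arbitrary decomposition. For the path $abc$ with bags $\{a,b\}$ and $\{b,c\}$, neither bag works. A set $S$ with $|S|\le k+1$ and every component of $G-S$ of order at most $(n-|S|)/2$ does exist, by the Robertson--Seymour balanced-separator lemma (Graph Minors~II). One can take a bag, or the intersection of two adjacent bags, of a smooth tree-decomposition.

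\textbf{The missing idea is to choose the grouping by edge density rather than by size.} With that choice, your own family of two-valued test functions suffices.

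Let $N=n-|S|>0$. Let $A$ be a component $C$ of $G-S$ minimising $e(C,S)/|C|$, and let $B=V\setminus(S\cup A)$, which is nonempty. Then $|A|\le N/2\le|B|$. Also $e(B,S)/|B|$ is a weighted average of densities that are all at least $e(A,S)/|A|$, so $e(A,S)/|A|\le e(B,S)/|B|$.

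Put $f=1/|A|$ on $A$, $f=-1/|B|$ on $B$ and $f=0$ on $S$. Then $\sum_v f(v)=0$ and there are no edges between $A$ and $B$. The usual Rayleigh quotient gives
\[
\lambda_2(G)\le\frac1N\Bigl(e(A,S)\frac{|B|}{|A|}+e(B,S)\frac{|A|}{|B|}\Bigr)=\frac{e(S,V\setminus S)}{N}+\frac{|B|-|A|}{N}\Bigl(\frac{e(A,S)}{|A|}-\frac{e(B,S)}{|B|}\Bigr)\le\frac{\Delta|S|}{n-|S|}.
\]
So the imbalance $|A|<|B|$ is paid for by the density ordering, and no exact balancing is needed. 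With this in place of Step~3, and with Step~1 corrected, your reduction proves the theorem.
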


\begin{thm}[Alon, Seymour and Thomas~{\cite[(1.8)]{AST}}]\label{thm:AST}
Let $h\ge 1$ be an integer, and let $G$ be a finite graph with $n$ vertices and treewidth at least $h^{3/2}n^{1/2}$. Then $G$ has a $K_h$ minor.
\end{thm}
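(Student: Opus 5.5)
Since this theorem is quoted from Alon, Seymour and Thomas and only applied in this paper, the following is a reconstruction of how I would prove it, not the argument of~\cite{AST} itself.

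\emph{Overall approach.} I would argue by the standard duality between treewidth and havens (brambles). By the theorem of Seymour and Thomas, if $\operatorname{tw}(G)\ge k$ then $G$ has a haven of order $k+1$. Such a haven is a map $\beta$ assigning to every set $X\subseteq V(G)$ with $|X|\le k$ a component $\beta(X)$ of $G-X$, consistently under inclusion. So it suffices to show the following. If $G$ on $n$ vertices has a haven of order $k> h^{3/2}n^{1/2}$, then $G$ contains a $K_h$ minor.

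\emph{Building the minor.} I would grow a \emph{covey}: a family $\mathcal C=\{C_1,\dots,C_m\}$ of pairwise disjoint connected subgraphs, any two joined by an edge, with each $C_i$ small, say $|V(C_i)|\le \sqrt{n/h}$. When $m=h$ this family is a $K_h$ minor.
\begin{itemize}
\item[1.] Given a covey with $m<h$, let $X$ consist of all vertices of the $C_i$. Then $|X|\le h\sqrt{n/h}=\sqrt{hn}<k$, so $B:=\beta(X)$ is defined.
\item[2.] If $B$ touches every $C_i$, look for a connected subgraph $T\subseteq B$ with at most $\sqrt{n/h}$ vertices that touches every $C_i$. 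If it exists, add $T$ to the covey.
\item[3.] If no such $T$ exists, pick a vertex of $B$ adjacent to some $C_i$ and consider breadth-first distance layers from it inside $B$. Among roughly $\sqrt{n/h}$ consecutive layers, some layer has at most $\sqrt{hn}$ vertices. Together with a small part of $X$, this layer forms a separator $Y$ with $|Y|<k$ that cuts $B$ away from some $C_i$.
\item[4.] Since $\beta$ is a haven, $\beta(Y)$ lies on one side of $Y$. Use this to replace the covey by another one (possibly with one member dropped or rerouted) that is strictly better in a fixed well-founded order, for example lexicographic in (number of members, minus total size).
\end{itemize}
Because the order is well-founded, the process cannot go on forever, so it terminates with $m=h$.

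\emph{Main obstacle.} The hardest step is the exchange argument in steps 3 and 4. One must make the separator small enough that the haven property applies: summing layer sizes shows some layer has at most $n/\ell$ vertices among $\ell\approx\sqrt{n/h}$ layers. One must also make the rerouted covey genuinely improve the potential, and keep every member of size at most $\sqrt{n/h}$ so that step 1 stays valid. I would try to balance three quantities: the member size bound $\sqrt{n/h}$, the number of members $h$, and the layer bound $n/\ell$. Their product is exactly what produces the threshold $h^{3/2}n^{1/2}$, and the constants have to be tracked carefully so that the strict inequality $|Y|<k$ holds.
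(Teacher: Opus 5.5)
Because the paper quotes this theorem from Alon--Seymour--Thomas without proof, the comparison here is with their argument. Your outline has the right overall shape: haven duality, a covey of small trees grown inside $\beta(X)$, and a dichotomy between a small tree and a small separator. However, step 3 is wrong as stated. Suppose no connected $T\subseteq B$ on at most $s$ vertices meets all $m$ attachment sets $A_i=N(C_i)\cap B$. You can only conclude that some $A_i$ is at distance more than $(s-1)/m$ from your vertex $v$; otherwise the union of shortest paths from $v$ to the $A_i$ would be such a $T$. So there are about $s/h$ layers, not $s$.

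The more serious problem is which side of the layer the haven picks. A layer around $v$ only separates a ball around $v$ from the rest of $B$. The haven may put $\beta(X\cup Y)$ on the far side, where it can still touch every member, and then no member is cut off. What you need is a set $Z$ meeting \emph{every} connected subgraph of $G[B]$ that meets all of $A_1,\dots,A_m$. This is the key lemma of Alon--Seymour--Thomas: either a tree on at most $r$ vertices meets every $A_i$, or such a $Z$ exists with $|Z|\le (m-1)|B|/r$.

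Your bookkeeping cannot be fixed by tracking constants. With members of size $\sqrt{n/h}$ and layers of size $\sqrt{hn}$, your argument only evaluates $\beta$ on sets of size about $2\sqrt{hn}$. It would therefore prove that treewidth at least $2\sqrt{hn}$ forces a $K_h$ minor, which is false. Take a cubic expander on $n$ vertices and $h=\lceil 2\sqrt n\,\rceil$: its treewidth is $\Omega(n)$, far above $2\sqrt{hn}=O(n^{3/4})$, yet it has fewer than $\binom h2$ edges and so no $K_h$ minor. The threshold $h^{3/2}n^{1/2}$ matches members of size up to $r=h^{1/2}n^{1/2}$. Then at most $h-1$ members plus a separator with at most $(h-2)n/r<r$ vertices have fewer than $hr=h^{3/2}n^{1/2}$ vertices in total.

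The decisive gap is step 4, which you leave open, and the order you suggest cannot work. Once $\beta(X\cup Z)$ misses $C_i$, no branch set built later inside the haven's component can be adjacent to $C_i$, so $C_i$ must be discarded. That lowers the first coordinate of your order, and nothing in the sketch shows the new state is better. Worse, if you discard $C_i$ and forget $Z$, then $\beta(X\setminus V(C_i))\supseteq B\cup V(C_i)$. In the next round the small tree touching the remaining members may be $C_i$ itself, so the procedure can cycle.

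The quantity that really shrinks is the haven component: $\beta(X\cup V(T))\subsetneq\beta(X)$ and $\beta(X\cup Z)\subsetneq\beta(X)$. But this holds only if $Z$ stays deleted, and then the deleted set is no longer just the covey, so your bound from step 1 no longer applies. A proof needs an exchange rule whose progress measure survives discarding members, plus a bound that keeps every set fed to $\beta$ below its order. That is the substance of the theorem, and your proposal does not supply it.
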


The following proof follows Bai, Liu and the present author~\cite[Section~5]{BLS}, where the bound $\operatorname{tw}(G)\le t$ for graphs with no $K_{2,t}$ minor plays the role of Theorem~\ref{thm:AST}.

\begin{thm}\label{thm:polyorder}
Let $t\ge 3$, and let $G$ be a connected graph with at least three vertices and no $K_t$ minor such that $\kappa(x,y)>0$ for every edge $xy\in E(G)$. Then
\[
  |V(G)|<\Bigl(t^{3/2}\Bigl(1+\frac{\Delta(G)^2(\Delta(G)-1)}{\delta(G)}\Bigr)+1\Bigr)^2 .
\]
\end{thm}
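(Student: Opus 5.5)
Write $n=|V(G)|$, $\Delta=\Delta(G)$ and $\delta=\delta(G)$. The plan is to chain the spectral and treewidth results of this section. Curvature gives a lower bound on $\lambda_2(G)$. That gives a lower bound on $\operatorname{tw}(G)$ which is linear in $n$. The absence of a $K_t$ minor gives an upper bound on $\operatorname{tw}(G)$ of order $t^{3/2}n^{1/2}$. Comparing the two bounds bounds $n$.

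First, $G$ must be finite for $L$, $\mathcal L$ and treewidth to make sense. If $\Delta$ is finite, Lemma~\ref{lem:order} gives finiteness; when $\Delta\le 2$, $G$ is a path or a cycle, so it is finite once degrees are bounded. If $\Delta=\infty$ the right-hand side is infinite and there is nothing to prove. Since $G$ is connected with at least three vertices, $\Delta\ge2$, so Lemma~\ref{lem:LWlower} applies and gives $\kappa(x,y)\ge 1/(\Delta(\Delta-1))$ on every edge. Theorem~\ref{thm:LLYeig} then yields $\mu_2(G)\ge 1/(\Delta(\Delta-1))$. By the Ostrowski consequence, $\lambda_2:=\lambda_2(G)\ge \delta\mu_2(G)\ge \delta/(\Delta(\Delta-1))$.

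Next, apply Theorem~\ref{thm:GHNOO}. The function $\lambda\mapsto n\lambda/(\Delta+\lambda)=n/(\Delta/\lambda+1)$ is increasing in $\lambda$, so we may substitute the lower bound for $\lambda_2$:
\[
\operatorname{tw}(G)+1\ge \frac{n}{1+\Delta/\lambda_2}\ge \frac{n}{1+\Delta^2(\Delta-1)/\delta}.
\]
Since $G$ has no $K_t$ minor, Theorem~\ref{thm:AST} with $h=t$ gives $\operatorname{tw}(G)<t^{3/2}n^{1/2}$. Set $M:=1+\Delta^2(\Delta-1)/\delta$. Combining the two bounds gives
\[
n< M\bigl(t^{3/2}n^{1/2}+1\bigr).
\]

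The last step, which is the only mildly delicate one, is extracting the stated form from this inequality. Dividing by $n^{1/2}$ and using $n\ge1$ gives $n^{1/2}<Mt^{3/2}+M n^{-1/2}\le Mt^{3/2}+M$. That is slightly weaker than the target $\bigl(t^{3/2}M+1\bigr)^2$. The fix is to treat the inequality as a quadratic in $s=n^{1/2}$: $s^2-Mt^{3/2}s-M<0$, so
\[
s<\tfrac12\Bigl(Mt^{3/2}+\sqrt{M^2t^3+4M}\Bigr)\le Mt^{3/2}+1,
\]
because $\sqrt{M^2t^3+4M}\le Mt^{3/2}+2$ whenever $4M\le 4Mt^{3/2}+4$, which clearly holds. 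Squaring gives $|V(G)|<\bigl(t^{3/2}M+1\bigr)^2$, which is the claim. The main thing to double-check is that the strict inequality from Theorem~\ref{thm:AST} carries through this chain.
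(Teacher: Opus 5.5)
Your proposal is correct and follows the paper's proof step for step: Lemma~\ref{lem:LWlower}, then Theorem~\ref{thm:LLYeig} with $\lambda_2\ge\delta\mu_2$, then Theorem~\ref{thm:GHNOO} and Theorem~\ref{thm:AST} to reach $n<M(t^{3/2}n^{1/2}+1)$; the only difference is cosmetic, since you finish with the quadratic formula whereas the paper assumes $n^{1/2}\ge t^{3/2}M+1$ and derives $n>(t^{3/2}n^{1/2}+1)M$, a contradiction. Your finiteness paragraph is unnecessary because by the paper's convention graphs are finite unless explicitly said to be locally finite, and its $\Delta\le 2$ reasoning is slightly off (an infinite path has bounded degree; the correct justification would be Lemma~\ref{lem:LWlower} with Theorem~\ref{thm:BonnetMyers}), though this does not affect the proof.
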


\begin{proof} 
Let $n:=|V(G)|$, $\Delta:=\Delta(G)$ and $\delta:=\delta(G)$. Since $G$ is connected and $n\ge 3$, we have $\Delta\ge 2$ and $\delta\ge 1$. By Lemma~\ref{lem:LWlower}, $\kappa(x,y)\ge 1/(\Delta(\Delta-1))$ for every edge $xy\in E(G)$. By  Theorem~\ref{thm:LLYeig} and the fact that $\lambda_2(G)\ge \delta(G)\,\mu_2(G)$, we have 
\[
  \lambda_2(G)\ge\delta\,\mu_2(G)\ge\frac{\delta}{\Delta(\Delta-1)}>0 .
\]
By Theorem~\ref{thm:GHNOO},
\[
  n\le\bigl(\operatorname{tw}(G)+1\bigr)\Bigl(1+\frac{\Delta}{\lambda_2(G)}\Bigr)\le\bigl(\operatorname{tw}(G)+1\bigr)K,\qquad\text{where}\quad K=1+\frac{\Delta^2(\Delta-1)}{\delta}.
\]
Since $G$ has no $K_t$ minor, we see that $\operatorname{tw}(G)<t^{3/2}n^{1/2}$ by Theorem~\ref{thm:AST}.  Therefore 
\[n<(t^{3/2}n^{1/2}+1)K.\] 
Suppose that $n^{1/2}\ge t^{3/2}K+1$. Then $n^{1/2}>K$, and so
\[
  n\ge n^{1/2}\bigl(t^{3/2}K+1\bigr)=t^{3/2}Kn^{1/2}+n^{1/2}>\bigl(t^{3/2}n^{1/2}+1\bigr)K,
\]
a contradiction. Hence $n^{1/2}<t^{3/2}K+1$, as desired.
\end{proof}

Combining Theorem~\ref{thm:polyorder} and Theorem~\ref{thm:LW}
leads to the following.

\begin{cor}
Let $G$ be a  connected   planar graph with $\delta(G)\ge 3$ and $\kappa(x,y)>0$ for every edge $xy\in E(G)$. Then   $|V(G)|\le 3\cdot 10^8$. 
\end{cor}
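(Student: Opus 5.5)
The plan is to combine the maximum-degree bound of Theorem~\ref{thm:LW} with the order bound of Theorem~\ref{thm:polyorder}, taking $t=5$. Let $G$ be a connected planar graph with $\delta(G)\ge 3$ and $\kappa(x,y)>0$ on every edge.

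First I check the hypotheses of Theorem~\ref{thm:polyorder}. By Wagner's theorem a planar graph has no $K_5$ minor, so $t=5$ is admissible. Since $\delta(G)\ge 3$, the graph $G$ has at least four vertices. Connectivity and positive curvature are assumed.

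Next, Theorem~\ref{thm:LW} gives $\Delta:=\Delta(G)\le 17$. Write $\delta:=\delta(G)\ge 3$. The quantity
\[
K=1+\frac{\Delta^2(\Delta-1)}{\delta}
\]
is increasing in $\Delta$ and decreasing in $\delta$. Hence
\[
K\le 1+\frac{17^2\cdot 16}{3}=1+\frac{4624}{3}<1542.34 .
\]

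Finally, Theorem~\ref{thm:polyorder} with $t=5$ yields $|V(G)|<(5^{3/2}K+1)^2$. Using $5^{3/2}<11.1804$, we get
\[
5^{3/2}K+1<11.1804\cdot 1542.34+1<17245 .
\]
Therefore
\[
|V(G)|<17245^2<2.98\cdot 10^8<3\cdot 10^8 .
\]

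There is no real obstacle here. The only points needing care are that the bound in Theorem~\ref{thm:polyorder} is monotone in $\Delta$ and $\delta$, so the extreme values $\Delta=17$ and $\delta=3$ may be substituted, and that the decimal estimates for $5^{3/2}$ and $K$ are rounded upward so the final inequality stays strict.
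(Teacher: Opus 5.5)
Your proposal is correct and follows the paper's own route exactly. The paper obtains the corollary by combining Theorem~\ref{thm:polyorder} (with $t=5$, since planar graphs have no $K_5$ minor) with the bound $\Delta(G)\le 17$ from Theorem~\ref{thm:LW}, and your arithmetic ($K\le 1+4624/3$, $5^{3/2}K+1<17245$, $17245^2<2.98\cdot 10^8$) correctly supplies the numerical details the paper leaves implicit.
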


\section{Proof of Theorem~\ref{thm:main}}\label{sec:proof}
We first recall an equivalent definition of an $H$ minor in a graph $G$.
Let $H$ be a graph on $h$ vertices. A collection
$\mc{X}=\{X_1,X_2,\ldots,X_h\}$ of pairwise disjoint nonempty subsets of $V(G)$
is a \emph{model of $H$ in $G$} if $G[X_i]$ is connected for every
$i\in[h]$, and there exists a bijection $\phi:V(H)\to[h]$ such that,
for every $uv\in E(H)$, there is an edge in $G$ between
$X_{\phi(u)}$ and $X_{\phi(v)}$.
It is well known and easy to see that $G$ contains $H$ as a minor if and
only if there exists a model of $H$ in $G$.
It also follows immediately that if $G$ has no $K_t$ minor,  then neither
does any subgraph of $G$; moreover, $G[N(x)]$ has no $K_{t-1}$ minor for every vertex  
$x\in V(G)$. 

We begin with a key lemma on graphs with no $K_t$ minor, which   plays
a crucial role in the proof of Theorem~\ref{thm:main}.

\begin{lem}\label{lem:pairs}
 Let $G$ be a locally finite graph with  no $K_t$ minor and let $x\in V(G)$ with $d(x)=D$. Suppose that every graph with no $K_{t-1}$ minor on $D$ vertices has at most $m_D$ edges.  Let $r\ge 2$ be an integer, and let $\mathcal{P}_r$ be the set of unordered pairs $\{y,y'\}$ of distinct vertices of $N(x)$ that have a common neighbor $z\in V(G)\setminus N[x]$ with  $|N(z)\cap N(x)|\le r$. Then
\[
   |\mathcal{P}_r|\le \frac r2\,\bigl(m_D-e(G[N(x)])\bigr)+e(G[N(x)]).
\]
\end{lem}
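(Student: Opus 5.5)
We prove the bound by building an auxiliary minor of $G[N(x)]$ on the vertex set $N(x)$. The graph will contain all edges of $G[N(x)]$ plus new edges obtained by contracting the outside vertices $z$. Since this graph has no $K_{t-1}$ minor, its edge count is at most $m_D$. Pairs in $\mathcal P_r$ that are already edges of $G[N(x)]$ contribute at most $e(G[N(x)])$, so we only need to bound the pairs in $\mathcal P_r$ that are non-edges of $G[N(x)]$. We will show there are at most $\frac r2(m_D-e(G[N(x)]))$ of these.

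First, we reduce to a finite graph. Since $G$ is locally finite and $D$ is finite, $\mathcal P_r$ is finite. This holds because every $y\in N(x)$ has finitely many neighbors, so there are finitely many relevant $z$. Choose, for each pair $p=\{y,y'\}\in\mathcal P_r$ that is not an edge of $G$, a witness $z_p\in V(G)\setminus N[x]$ with $|N(z_p)\cap N(x)|\le r$. Let $Z$ be the finite set of all such witnesses. Every $z\in Z$ lies outside $N[x]$ and has at most $r$ neighbors in $N(x)$, so it witnesses at most $\binom r2$ pairs.

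Next comes the main step, a contraction that produces a minor of $G-x$ on $N(x)$. Process the vertices $z\in Z$ one by one. For each $z$, look at $S_z:=N(z)\cap N(x)$, with $|S_z|\le r$, and find a vertex $w_z\in S_z$ in the current graph $H$ such that contracting the edge $zw_z$ adds many new edges of $H[N(x)]$. Contracting $z$ into $w_z$ adds the edges $w_zs$ for $s\in S_z$. Different $z$ are distinct vertices outside $N[x]$, so these contractions are compatible and each yields a minor of $G-x$. The final graph $H$ on $N(x)$ is a minor of $G[N(x)\cup Z]$, which is a minor of $G-x$. Hence $H$ has no $K_{t-1}$ minor, because adding $x$, which is adjacent to all of $N(x)$, would give a $K_t$ minor. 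Therefore $e(H)\le m_D$.

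The main obstacle is the counting. A single contraction of $z$ only creates the star from $w_z$, which has at most $|S_z|-1$ edges. However, $z$ may witness up to $\binom{|S_z|}2$ new pairs, and the ratio of these two quantities is $|S_z|/2\le r/2$. To turn this into the required bound, handle the contractions greedily. At each step, let $a_z$ be the number of pairs within $S_z$ that are still non-edges of the current $H$ (this counts pairs of $\mathcal P_r$ not yet covered). Choose $w_z$ to maximize the number of new edges. Some vertex of $S_z$ is incident to at least $2a_z/|S_z|\ge 2a_z/r$ of the $a_z$ missing pairs, and contracting onto it adds at least that many edges. A pair of $\mathcal P_r$ that becomes an edge of $H$ for some other reason is simply never counted again. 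Summing over $z$, the number of non-edge pairs of $\mathcal P_r$ is at most $\sum_z a_z\le \frac r2\sum_z(\text{new edges from } z)\le \frac r2\,(e(H)-e(G[N(x)]))\le\frac r2(m_D-e(G[N(x)]))$. Adding the at most $e(G[N(x)])$ pairs that are already edges gives the stated bound.
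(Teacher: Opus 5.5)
Your plan is essentially the paper's: contract each outside witness $z$ onto a vertex $w_z\in N(z)\cap N(x)$ chosen by averaging, note that the resulting graph on $N(x)$ has no $K_{t-1}$ minor (add back $x$), and compare its number of edges with $m_D$. The difference is in the bookkeeping. The paper's is static. It fixes one witness $z(q)$ for each non-edge pair $q$, so the classes $Q_z$ partition $Q=\mathcal{P}_r\setminus E(G)$. It then picks $w_z$ lying in at least $2|Q_z|/r$ pairs of $Q_z$ and contracts all edges $zw_z$ at once. Because the $Q_z$ are disjoint, the guaranteed new edges are automatically distinct, and any further edges created by the contraction are simply ignored.

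Your sequential greedy bookkeeping also works, but one step is not justified as written: the inequality ``number of non-edge pairs of $\mathcal{P}_r$ $\le\sum_z a_z$''. The problem is that vertices of $Z$ may be adjacent to one another. Once $z''$ has been merged into $w_{z''}$, contracting a neighbour $z'$ of $z''$ also creates the edge $w_{z'}w_{z''}$, which need not be a pair inside $S_{z'}$. If that pair lies in $\mathcal{P}_r$ and its own witness is processed later, it is counted in no $a_z$. For instance, take $S_{z_1}=\{a,b\}$, $S_{z_2}=\{c,d\}$, $S_{z_3}=\{a,c\}$ with $a,b,c,d$ pairwise non-adjacent and $z_1z_2\in E(G)$, process in the order $z_1,z_2,z_3$, and choose $w_{z_1}=a$, $w_{z_2}=c$. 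Then $\{a,c\}$ is created at step $z_2$ without lying in $S_{z_2}$, and $a_{z_1}+a_{z_2}+a_{z_3}=2<3$.

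The fix is one line: delete the edges of $G[Z]$ before contracting, which is allowed since you only need a minor. Then contracting $z$ adds exactly the missing edges $w_zs$ with $s\in S_z$, so every new edge at step $z$ is one of the $a_z$ missing pairs. Every non-edge pair $p$ of $\mathcal{P}_r$ is then counted either in $a_{z_p}$ (if still missing at its witness's step) or in $a_{z'}$ for the earlier step $z'$ at which it became an edge, and your chain of inequalities goes through. The greedy choice gives the same bound as the paper; the paper's fixed partition into the $Q_z$ just avoids tracking the current graph and the order of contractions altogether.
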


\begin{proof} Since $G$ is locally finite, we see that $N(x)$ is finite.  Let $Q:=\mathcal{P}_r\setminus E(G[N(x)])$. For each $q\in Q$,  fix a common neighbor $z(q)\in V(G)\setminus N[x]$ of the two vertices of $q$ such that  $|N(z(q))\cap N(x)|\le r$. Let $Z:=\{z(q)\mid q\in Q\}$. Then $Z$ is finite because every $z\in Z$ has a neighbor in $N(x)$ and $N(x)$ is finite. 
For each $z\in Z$,  let $Q_z:=\{q\in Q\mid z(q)=z\}$. Then the sets $\{Q_z\mid   z\in Z\}$ partition $Q$, and each $Q_z$ is a nonempty set of unordered pairs of vertices of $N(z)\cap N(x)$. Thus  $2\le |N(z)\cap N(x)|\le r$.\medskip 

Let $z\in Z$. Counting the pairs $(v,q)$ with $v\in N(z)\cap N(x)$, $q\in Q_z$ and $v\in q$ in two ways yields
\[
  \sum_{v\in N(z)\cap N(x)}\bigl|\{q\in Q_z\mid v\in q\}\bigr|=|\{(v, q)\mid v\in q \text{ and } q\in Q_z\}|=2|Q_z|.
\]
Since $ |N(z)\cap N(x)|\le r$, it follows that  there exists  a vertex $w_z\in N(z)\cap N(x)$ such that $w_z$ lies in at least $2|Q_z|/|N(z)\cap N(x)|\ge 2|Q_z|/r$ pairs of $Q_z$. Let $M=\{zw_z\mid z\in Z\}$.  Note that  $|M|=|Z|$ and $M$ is not necessarily a matching in $G$. Let $H$ be obtained from $G$ by first contracting every edge $zw_z$ in $M$ onto $w_z$ and then deleting all the other vertices in $V(G)\setminus N[x]$ and any resulting loops and parallel edges. Then $H$ is a minor of   the finite subgraph $G[N[x]\cup Z]$ with $V(H)= N[x]$. Moreover,  $x$ is adjacent to all vertices in $N(x)$ in $H$. Therefore, $H[N(x)]$ contains no $K_{t-1}$ minor. By the assumption, we have $e(H[N(x)])\le m_D$.  On the other hand,  every edge of $G[N(x)]$ is an edge of $H[N(x)]$.  For every $z\in Z$, since  $\{Q_z\mid   z\in Z\}$  are pairwise disjoint and  $w_z $   lies in at least $2|Q_z|/r$ pairs of $Q_z$, we see that contracting the edge $zw_z$    adds at least  $2|Q_z|/r$ new edges incident to $w_z$ in  $H[N(x)]\setminus E(G[N(x)])$.  Let $e:=e(G[N(x)])$. Then 
\[
  m_D\ \ge\ e(H[N(x)])\ \ge\ e+\sum_{z\in Z}\frac{2|Q_z|}r\ \ge\ e+\frac{2}{r}\sum_{z\in Z}|Q_z|\ =\ e+\frac{2}{r}\,|Q|.
\]
It follows that   $|Q|\le\frac r2(m_D-e)$, and so $|\mathcal{P}_r|\le |Q|+e\le\frac r2(m_D-e)+e$, as desired.
\end{proof}

We are now ready to prove Theorem~\ref{thm:main}, which we restate below for convenience.\finite*

\begin{proof} 
Let $t,  G, c(t)$ be as in the statement. Let $c:=c(t)$. By Proposition~\ref{prop:sharp}, $c\ge t-2\ge 3$. Fix a vertex $x$ in $G$. Let  $D:=d(x)$.  We next apply Lemma~\ref{lem:pairs} to show that
\begin{equation}\label{eq:goal}
  D<5c^3t^2 .
\end{equation}
Suppose  $D\ge 5c^3t^2$. 
Let $ e:=e(G[N(x)])$. 
For each $y\in N(x)$, by 
Lemma~\ref{lem:LW1} applied to the edge $xy$  and a nonempty set $S\subseteq N(y)\setminus\{x\}$, we have 
\begin{equation}\label{eq:LW}
  \bigl|N(x)\setminus(N[S]\cup N(y))\bigr| < |S\cap N(x)|+1+D\big(1-|S|/d(y)\big).
\end{equation}
Note that $\sum_{y\in N(x)}|N(y)\cap N(x)|=2e$ because each edge of $G[N(x)]$ is counted twice in   the sum $\sum_{y\in N(x)}|N(y)\cap N(x)|$. Since $G$ has no $K_t$ minor, we see that $G[N(x)]$ has no $K_{t-1}$ minor. By the definition of $c=c(t)$, we have $e\le cD$. Thus 
\begin{equation}\label{eq:e(N(x))}
\sum_{y\in N(x)}|N(y)\cap N(x)|=2e\le 2cD.
\end{equation}
Moreover, for every finite set $A\subseteq V(G)\setminus N[x]$, $G[A\cup N[x]]$ has no $K_t$ minor. By the choice of $c$,  $e(G[A\cup N[x]])\le c(|A|+1+D)$. Note that $e(G[N[x]])= D+e$, and so $e(G[A\cup N[x]])\ge D+e+ \sum_{a\in A}|N(a)\cap N(x)|$. It follows from the two inequalities on $e(G[A\cup N[x]])$ that  
\[\sum_{a\in A}|N(a)\cap N(x)|-c|A|\le c(D+1)-D-e \] and so  

\begin{equation}\label{eq:e(A)}
 \sum_{a\in A}(|N(a)\cap N(x)|-c)\le c(D+1)-e-D \text{ for every  finite  set } A\subseteq V(G)\setminus N[x].
\end{equation}
Let $\eps=1/(5c)$, and let
\[
  U:=\{v\in V(G)\mid  v\ne x \text{ and } |N(v)\cap N(x)|\ge\eps D\} \text{ and }\]
 \[W:=\{y\in N(x)\setminus U\mid |N(y)\cap U|\ge t-2\}.
\]
Since $G$ is locally finite, we see that $N(x)$ is finite and so  $U$ is finite because every vertex of $U$ has a neighbor in $N(x)$. Note  that $\eps D=D/(5c)\ge c^2t^2$. The sets $U$ and $W$, together with the set $Z$  defined below, are illustrated in Figure~\ref{fig:sets}. We next prove several claims.

\begin{figure}[thb]
\centering
\begin{tikzpicture}[
    vtx/.style={circle, fill=#1, inner sep=0pt, minimum size=4.3pt},
    vtx/.default=black!80,
    lab/.style={font=\small},
  ]
  \fill[normalColor!8] (0,0) ellipse [x radius=5, y radius=1.95];
  \fill[withinColor!17, draw=withinColor!65, line width=0.5pt] (-2.45,0.35) ellipse [x radius=1.75, y radius=0.95];
  \fill[betweenColor!15, draw=betweenColor!65, line width=0.5pt] (2.45,0.35) ellipse [x radius=1.75, y radius=0.95];
  \fill[upperColor!28, fill opacity=0.8, draw=upperColor!80, line width=0.6pt] (0,-1.95) ellipse [x radius=2.9, y radius=0.85];
  \draw[normalColor!70, line width=0.6pt] (0,0) ellipse [x radius=5, y radius=1.95];
  \coordinate (x)  at (0,3.3);
  \coordinate (w1) at (-3.35,0.45); \coordinate (w2) at (-2.55,0.95);
  \coordinate (w3) at (-1.55,0.3);  \coordinate (w4) at (-2.6,-0.2);
  \coordinate (y)  at (1.6,0.75);   \coordinate (b2) at (2.45,1.05);
  \coordinate (b3) at (2.7,0.05);   \coordinate (yp) at (3.65,0.45);
  \coordinate (u1) at (-1.25,-1.5); \coordinate (u2) at (0.95,-1.5);
  \coordinate (u3) at (-0.55,-2.35);\coordinate (u4) at (1.75,-2.3);
  \coordinate (z)  at (3.6,-2.5);
  \foreach \v in {w1,w2,w3,w4,y,b2,b3,yp,u1,u2}
     \draw[black!28, line width=0.3pt] (x) -- (\v);
  \foreach \v in {w4,b2,b3,yp}
     \draw[upperColor!75!black, line width=0.4pt, opacity=0.5] (u4) -- (\v);
  \foreach \v in {u1,u3,u4}
     \draw[withinColor!70!black, line width=0.75pt] (w3) -- (\v);
  \draw[betweenColor!80!black, line width=0.8pt, dash pattern=on 3pt off 2pt] (y) -- (z) -- (yp);
  \node[vtx] at (x) {};
  \foreach \v in {u1,u2,u3,u4} \node[vtx=upperColor!65!black] at (\v) {};
  \foreach \v in {w1,w2,w3,w4} \node[vtx=withinColor!55!black] at (\v) {};
  \foreach \v in {y,b2,b3,yp} \node[vtx=betweenColor!70!black] at (\v) {};
  \node[vtx=black!55] at (z) {};
  \node[lab, above=3pt] at (x) {$x$};
  \node[lab, text=normalColor!80!black, anchor=south east] at (-3.9,1.3) {$N(x)$};
  \node[lab, text=withinColor!60!black] at (-3.55,1.0) {$W$};
  \node[lab, text=betweenColor!75!black] at (3.55,1.0) {$Z$};
  \node[lab, text=upperColor!75!black, anchor=east] at (-2.95,-2.15) {$U$};
  \node[lab, anchor=east, inner sep=2.5pt] at (y) {$y$};
  \node[lab, anchor=west, inner sep=2.5pt] at (yp) {$y'$};
  \node[lab, anchor=west, inner sep=3pt] at (z) {$z\in R_y$};
\end{tikzpicture}
\caption{$N(x)$ is the disjoint union of $U\cap N(x)$, $W$ and $Z$.}
\label{fig:sets}
\end{figure} 

\begin{claim}\label{cl:U}
$|U|\le 5c^2-4c$.
\end{claim}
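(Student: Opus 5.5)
The plan is to bound $|U|$ by edge-counting in a finite subgraph around $x$, using the definition of $c=c(t)$ together with \eqref{eq:e(N(x))} and \eqref{eq:e(A)}. I will split $U$ into $A:=U\setminus N[x]$ and $B:=U\cap N(x)$, with $a:=|A|$ and $b:=|B|$. Note that $x\notin U$ by definition, and $U$ is finite, so everything below concerns finite graphs.

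\emph{Step 1 (vertices outside $N[x]$).} Apply \eqref{eq:e(A)} to the finite set $A$. Every $v\in A$ has $|N(v)\cap N(x)|\ge \eps D$, so
\[
a(\eps D-c)\le c(D+1)-e-D .
\]

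\emph{Step 2 (vertices inside $N(x)$).} By \eqref{eq:e(N(x))}, $b\,\eps D\le\sum_{y\in N(x)}|N(y)\cap N(x)|=2e$, so $e\ge b\eps D/2$. Substituting this lower bound for $e$ into Step 1 eliminates $e$ and couples the two parts:
\[
a(\eps D-c)+\frac{b\,\eps D}{2}\le (c-1)D+c .
\]
Since $\eps D=D/(5c)$ and $D\ge 5c^3t^2$, the correction terms $c$ and $c\,a$ are negligible against $D$. Roughly, this yields $a+\tfrac b2\le 5c(c-1)+o(1)$.

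\emph{Step 3 (the main obstacle).} The inequality in Step 2 only controls $a+b/2$. If it is used alone, $b$ could be as large as about $10c^2$, which exceeds the target $5c^2-4c$. So the real work is to count each vertex of $B$ with weight $1$ rather than $1/2$. An edge inside $N(x)$ between two vertices of $B$ is counted twice, and that double counting is exactly where the factor $\tfrac12$ comes from.

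My first attempt will be to run the edge count on the finite graph $G[N[x]\cup A]$ directly. Its edge set contains the $D$ edges at $x$, the edges from $A$ into $N(x)$ (at least $a\,\eps D$ of them), and the edges from $B$ into $N(x)\setminus B$. The last group numbers at least $b(\eps D-b)$, since each $u\in B$ has at least $\eps D-b$ neighbours in $N(x)\setminus B$, and these edges are distinct. These three groups are pairwise disjoint, so
\[
D+a\eps D+b(\eps D-b)\le c(D+1+a).
\]
Here the $b^2$ term is the nuisance. I would first get a crude bound $b\le 10c^2$ from Step 2 and then feed it back in, obtaining $(a+b)\bigl(\eps D-c-10c^2\bigr)\le (c-1)D+c$. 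This gives $|U|\le 5c(c-1)$ plus an error of order $c^5/D$. The aim is to absorb that error into the slack $c$ between $5c^2-5c$ and $5c^2-4c$, using $D\ge5c^3t^2$.

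If the constants do not close this way (the $10c^2$ loss is the delicate point when $t$ is small), my fallback is a contraction in the spirit of Lemma~\ref{lem:pairs}. I would delete $x$ and treat $N(x)\setminus B$ as the base set. The vertices of $B$ and of $A$ then play symmetric roles as vertices each having at least $\eps D-b$ neighbours in the base, so each is counted once. I would then apply the definition of $c$ to $G[U\cup(N(x)\setminus B)]$ plus $x$.
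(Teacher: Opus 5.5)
Your Step 1 is exactly what the paper does for $U\setminus N[x]$. Your diagnosis in Step 3 is also right: the factor $\tfrac12$ comes from edges of $G[N(x)]$ with both ends in $B=U\cap N(x)$. The repair you propose, however, does not prove the claim, because the loss it incurs is quadratic in $b$. In your first attempt you control those edges only through the trivial fact that a vertex of $B$ has at most $b$ neighbours in $B$, which costs $b^2\le 10c^2b$. Carrying your computation through gives $|U|\le 5c((c-1)D+c)/(D-5c^2-50c^3)$. This is at most $5c^2-4c$ only if $D\ge 250c^4-175c^3-20c^2+5c$. From $D\ge 5c^3t^2$ that requires roughly $t^2\ge 50c$, which fails, e.g., for every $5\le t\le 9$, where $c=t-2$.

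Concretely, take $t=5$, $c=3$, $D=3375$. Your inequality gives only $|U|\le 51$ against the target $33$. Even solving the constraint $a(\eps D-c)+b(\eps D-b)\le (c-1)D+c$ exactly still allows $|U|=35$; the worst case is $a=0$, $b=35$, since $35\cdot 190\le 6753$. So the error cannot be absorbed into the slack $c$ by sharper bookkeeping. Your fallback is not a concrete argument. Each vertex of $U$ still has only $\eps D-b$ guaranteed neighbours in the base set, so the same loss remains. If $x$ is actually deleted, you also lose the $D$ edges at $x$ that turn $cD$ into $(c-1)D$, leaving only about $|U|\lesssim 5c^2$.

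The missing idea is to bound the edges inside $B$ linearly, using the hypothesis itself. The graph $G[B]$ has no $K_t$ minor, so $e(G[B])<c\,b$. The sum $\sum_{v\in B}|N(v)\cap N(x)|$ counts each edge of $G[B]$ twice and every other edge of $G[N(x)]$ at most once. Hence $e\ge\sum_{v\in B}|N(v)\cap N(x)|-e(G[B])\ge(\eps D-c)\,b$. Equivalently, in your first attempt, count the edges from $B$ into all of $N(x)$, of which there are at least $b\eps D-e(G[B])$, rather than only those into $N(x)\setminus B$.

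Adding this to your Step 1 inequality eliminates $e$ and yields $(\eps D-c)|U|\le (c-1)D+c$. Hence $|U|\le 5c((c-1)D+c)/(D-5c^2)\le 5c^2-4c$, because $(5c^2-4c)(D-5c^2)-5c((c-1)D+c)=c(D-25c^3+20c^2-5c)\ge 0$ when $D\ge 5c^3t^2$. This is precisely the paper's proof.
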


\begin{proof}  Let $U_1:=U\cap N(x)$ and $U_2:=U\setminus N(x)$. By the choice of $U$, 
every $v\in U$ satisfies $|N(v)\cap N(x)|\ge\eps D$. In the sum $\sum_{v\in U_1}|N(v)\cap N(x)|$, every edge of $G[N(x)]$ with both ends in $U_1$ is counted twice and every other edge of $G[N(x)]$ is counted at most once. Since $G[U_1]$ has no $K_t$ minor, we have $e(G[U_1])\le c|U_1|$, and so
\[
  e\ \ge\ \sum_{v\in U_1}|N(v)\cap N(x)|-e(G[U_1])\ \ge\ \eps D|U_1|-c|U_1|=(\eps D-c)|U_1|.
\]
Moreover, every $z\in U_2$ satisfies $|N(z)\cap N(x)|-c\ge\eps D-c$. By inequality~\eqref{eq:e(A)} applied to $A:=U_2$, we have
\[
  (\eps D-c)|U_2|\le\sum_{z\in U_2}\bigl(|N(z)\cap N(x)|-c\bigr)\le c(D+1)-e-D.
\]
Combining  the two inequalities yields $(\eps D-c)|U|\le (c-1)D+c$. Since $\eps D-c=(D-5c^2)/(5c)>0$, we have
\[
  |U|\le\frac{5c\bigl((c-1)D+c\bigr)}{D-5c^2}\le 5c^2-4c,
\]
where the last inequality holds because $D\ge 5c^3t^2> 25c^3$, and so $(5c^2-4c)(D-5c^2)-5c\bigl((c-1)D+c\bigr)=c\,(D-25c^3+20c^2-5c)\ge 0$.
\end{proof}

\begin{claim}\label{cl:W}
 $|W|\le c\binom{t-2}{2}|U|$.
\end{claim}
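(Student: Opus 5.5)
Following the outline in the introduction, we call a pair $\{u,u'\}$ of distinct vertices of $U$ \emph{heavy} if $u$ and $u'$ have at least $t$ common neighbors in $N(x)\setminus U$, and \emph{light} otherwise. The plan has three steps: (i) show that every $y\in W$ has two neighbors in $U$ forming a light pair; (ii) assign each $y\in W$ to such a pair; (iii) bound the number of light pairs that can receive assigned vertices, and the number of vertices each pair can receive.

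For (i), suppose some $y\in W$ is such that every pair among $t-2$ fixed neighbors $u_1,\dots,u_{t-2}\in N(y)\cap U$ is heavy. We build a $K_t$ model with branch sets
\[
\{x\},\quad \{y\},\quad \{u_i\}\cup P_i \quad (i\in[t-2]),
\]
where the sets $P_i$ are obtained as follows. For each of the $\binom{t-2}{2}$ pairs $\{i,j\}$, choose a distinct common neighbor $w_{ij}\in N(x)\setminus(U\cup\{y\})$ of $u_i$ and $u_j$, and add $w_{ij}$ to $P_i$. Then:
\begin{itemize}
\item $u_i$ is adjacent to $y$;
\item $\{u_i\}\cup P_i$ is adjacent to $x$ through $w_{ij}\in N(x)$;
\item $\{u_i\}\cup P_i$ is adjacent to $\{u_j\}\cup P_j$ through the edge $w_{ij}u_j$;
\item $x$ is adjacent to $y$.
\end{itemize}
The sets $P_i$ are needed to guarantee that each $\{u_i\}\cup P_i$ touches $x$, since $u_i$ itself may lie outside $N(x)$. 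The difficulty is distinctness: the representatives $w_{ij}$ must be chosen disjointly across all pairs. Having at least $t$ common neighbors per pair suffices for a greedy choice only if $t\ge\binom{t-2}{2}+1$, which fails for large $t$. I will therefore raise the threshold in the definition of heavy to $\binom{t-2}{2}+2$. This threshold is absorbed later, because the bound in the claim counts pairs and not the threshold. With it, a greedy choice always succeeds: when a pair is processed, fewer than $\binom{t-2}{2}$ representatives have been used, and $y$ must also be avoided, so one more common neighbor is needed. This gives a $K_t$ minor, a contradiction. Hence every $y\in W$ has a light pair $\{u,u'\}$ of neighbors in $U$. Ensuring this disjointness is the step I expect to be the main obstacle, and it is where the threshold must be tuned.

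For (ii) and (iii), assign each $y\in W$ to one such light pair, and let $F$ be the set of light pairs receiving at least one vertex. A light pair receives fewer than $\binom{t-2}{2}+2$ vertices of $W$, since each assigned vertex is a common neighbor in $N(x)\setminus U$. Hence
\[
|W|\le\Bigl(\binom{t-2}{2}+1\Bigr)|F|.
\]
To bound $|F|$, pick one assigned vertex $y_f$ for each $f=\{u,u'\}\in F$ and contract the edge $y_fu$. This produces the edge $uu'$, and since the $y_f$ are distinct, we obtain a graph on vertex set $U$, a minor of $G$, with at least $|F|$ edges. This graph has no $K_t$ minor, so $|F|\le c|U|$. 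The resulting bound is $|W|\le c(\binom{t-2}{2}+1)|U|$, slightly weaker than the claim. To remove the $+1$, I would instead define heavy as having at least $\binom{t-2}{2}+1$ common neighbors in $N(x)\setminus(U\cup\{y\})$; then $y$ itself is excluded from the count, and a light pair receives at most $\binom{t-2}{2}$ vertices. This yields $|W|\le c\binom{t-2}{2}|U|$, as claimed.
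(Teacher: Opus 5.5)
Your overall route is the paper's: a Norin--Postle--Song $K_t$ model shows that every $y\in W$ has a light (unmated) pair among its neighbors in $U$, and then contracting each $y$ onto one end of its pair gives $|F|\le c|U|$. Step (iii) is fine. As written, however, you only prove $|W|\le c\bigl(\binom{t-2}{2}+1\bigr)|U|$, and your repair of the $+1$ does not work. Suppose lightness is defined relative to $y$, that is, at most $\binom{t-2}{2}$ common neighbors in $N(x)\setminus(U\cup\{y\})$. If $y_1,\dots,y_k\in W$ are all assigned to the pair $f$, then lightness of $f$ for $y_1$ only bounds $y_2,\dots,y_k$. So you again get only $k\le\binom{t-2}{2}+1$, and the assertion that a light pair receives at most $\binom{t-2}{2}$ vertices does not follow.

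The actual fix is that your threshold $\binom{t-2}{2}+2$ was one too large. By your own count, when a pair is processed at most $\binom{t-2}{2}-1$ earlier representatives plus $y$ itself are forbidden. So it suffices, as in the paper, to call $\{u,u'\}$ heavy when $u,u'$ have more than $\binom{t-2}{2}$ common neighbors in $N(x)\setminus U$; this definition is independent of $y$. Then every $y$ assigned to a light pair is one of its at most $\binom{t-2}{2}$ common neighbors in $N(x)\setminus U$. This gives $|W|\le\binom{t-2}{2}|F|\le c\binom{t-2}{2}|U|$.

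There is also a smaller gap in your $K_t$ model. If $w_{ij}$ with $i<j$ is put into $P_i$, then $P_{t-2}=\emptyset$. The branch set $\{u_{t-2}\}$ then need not be adjacent to $\{x\}$, since vertices of $U$ may lie outside $N(x)$. Your bullet claiming every $\{u_i\}\cup P_i$ touches $x$ fails for this index. You must move one representative, say $w_{1,t-2}$, into $P_{t-2}$. Then $P_1$ stays nonempty (it still contains $w_{12}$) only because $t\ge 5$. This is exactly why the paper takes $Z_t=\{u_t,z_{3t}\}$ and keeps $z_{34},\dots$ in $Z_3$.
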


\begin{proof} The proof of this claim follows an idea of Norin, Postle and Song~\cite{NPS}. 
We call  a pair $\{u,u'\}$ of distinct vertices of $U$ \emph{mates} if $u$ and $u'$ have more than $\binom{t-2}{2}$ common neighbors in $N(x)\setminus U$,  otherwise, we call them \emph{unmated}. We first show that no vertex $z\in W$ has $t-2$ neighbors in $U$ that are pairwise mates. Suppose to the contrary that there exists $z\in W$   adjacent to $u_3,\dots,u_t\in U$ such that $u_3,\dots,u_t$ are pairwise mates. 
For $3\le i<j\le t$ we choose, one after another, distinct vertices $z_{ij}\in N(x)\setminus U $ with $z_{ij}\ne z$ such that $z_{ij}$ is adjacent to both $u_i$ and $u_j$. This is possible, since $u_i$ and $u_j$ have at least $\binom {t-2}{2}+1$ common neighbors in $N(x)\setminus U$, of which at most $\binom{t-2}{2}$ are excluded (namely $z$ and  at most $\binom {t-2}{2}-1$ vertices chosen before). Let $Z_1:=\{x\}$, $Z_2:=\{z\}$, $Z_t:=\{u_t, z_{3t}\}$ and $Z_3:=\{u_3\}\cup\{z_{3j}\mid 3<j\le t-1\}$. For $4\le i\le t-1$, let $Z_i:=\{u_i\}\cup\{z_{ij}\mid i<j\le t\}$.  It can be easily checked that each $G[Z_i]$ is connected,  and for $1\le i<j\le t$, there is an edge in $G$ between $Z_i$ and $Z_j$.  Thus $\{Z_1,\ \dots,\ Z_t\}$
 is a model of $ K_t$ in $G$, contrary to  the assumption that $G$ has no $K_t$ minor. This proves that no vertex $z\in W$ has $t-2$ neighbors in $U$ that are pairwise mates. 

Let $z\in W$. By the choice of $W$, $z$ is adjacent to at least $t-2$ vertices in $U$. Choosing any $t-2$ neighbors of $z$ in $U$, we see from the above that $z$ has two distinct neighbors in $U$ that are unmated. We fix such an unordered pair $P_z\subseteq N(z)\cap U$. Let $\mathcal{P}:=\{P_z\mid z\in W\}$. Note that $|\mathcal{P}|\le |W|$ because  distinct vertices $z, z'\in W$ may satisfy $P_z=P_{z'}$.  Let $U:=\{x_1, \ldots, x_{|U|}\}$. For each $1\le i\le |U|-1$, let $X_i:=\{x_i\}\cup\{z\in W\mid P_z=\{x_i, x_j \} \text{ for some } i<j\le |U|\}$. Finally, let $X_{|U|}:=\{x_{|U|}\}$. It can be easily checked that $X_1, \ldots, X_{|U|}$ are pairwise disjoint, and each $G[X_i]$ is connected. Moreover, $G$ has an edge between $X_i$ and $X_j$   if $\{x_i, x_j \}\in \mathcal{P}$. Let $H$ be the graph with vertex set $U$ such that two distinct vertices   $x_i, x_j$ are adjacent in $H$ if and only if $G$ has an edge between $X_i$ and $X_j$. Then $|\mathcal{P}|\le e(H)$ and $X_1, \ldots, X_{|U|}$ is a model of $H$ in $G$. Thus $H$ has no $K_t$ minor and so $|\mathcal{P}|\le e(H)\le c|U|$.  On the other hand, each $\{x_i, x_j \}\in \mathcal{P}$ are unmated and thus  have at most $\binom{t-2}{2}$ common neighbors in $N(x)\setminus U$.  Therefore $|W|\le \binom{t-2}{2}|\mathcal{P}|\le c\binom{t-2}{2}|U|$.
\end{proof}

Let $Z:=N(x)\setminus(U\cup W)$. Then  $N(x)$ is the disjoint union of $U\cap N(x)$, $W$ and $Z$. Thus  $D=|N(x)|\le |U |+|W|+|Z|$. It remains to bound $|Z|$.  For each $y\in Z$,  let 
\[
  R_y:=\{v\in N(y)\mid  v\ne x \text{ and } v\notin U\} \quad\text{and}\quad C_y:=\{v\in N(x)\mid v\ne y \text{ and } N(v)\cap R_y\ne\emptyset\}.
\]
Then  every vertex in $C_y$ has a neighbor in $R_y$ and $N(R_y)\cap N(x)\subseteq C_y\cup\{y\}$.

\begin{claim}\label{cl:Z}
For every $y\in Z$,    $R_y \neq\emptyset$ and $|C_y|>4D/5-t(|N(x)\cap N(y)|+3)$.
\end{claim}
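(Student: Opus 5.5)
The plan is to apply Lemma~\ref{lem:LW1} (in the form \eqref{eq:LW}) to the edge $xy$ with $S:=R_y$, and then to account for which vertices of $N(x)$ can fail to lie in $C_y$.

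\emph{Step 1: $R_y\neq\emptyset$.} Since $y\in Z$, we have $y\notin W$ and $y\notin U$, so $|N(y)\cap U|\le t-3$. The neighbors of $y$ are $x$, at most $t-3$ vertices of $U$, and the vertices of $R_y$. Hence
\[
|R_y| = d(y)-1-|N(y)\cap U| \ge d(y)-t+2\ge 1,
\]
using $\delta(G)\ge t-1$. In particular $d(y)-|R_y|=1+|N(y)\cap U|\le t-2$.

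\emph{Step 2: apply \eqref{eq:LW} with $S=R_y$.} A vertex $v\in N(x)\setminus\{y\}$ lies outside $C_y$ exactly when $v$ has no neighbor in $R_y$. So every vertex of $N(x)\setminus(C_y\cup\{y\})$ lies either in $R_y\cap N(x)\subseteq N(y)\cap N(x)$ or in $N(x)\setminus(N[R_y]\cup N(y))$, or it is a vertex of $N(y)\cap N(x)$. Therefore
\[
D-1-|C_y|\le |N(x)\cap N(y)| + \bigl|N(x)\setminus(N[R_y]\cup N(y))\bigr|.
\]
By \eqref{eq:LW} and $|R_y\cap N(x)|\le|N(x)\cap N(y)|$, this gives
\[
|C_y| > D-2|N(x)\cap N(y)|-2-\frac{D\,(1+|N(y)\cap U|)}{d(y)}\ge D-2|N(x)\cap N(y)|-2-\frac{(t-2)D}{d(y)}.
\]
If $d(y)\ge 5(t-2)$, the last term is at most $D/5$ and the claim follows with room to spare, since $2(|N(x)\cap N(y)|+1)\le t(|N(x)\cap N(y)|+3)$.

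\emph{Step 3: the main obstacle, $d(y)<5(t-2)$.} Here the loss $D(1+k)/d(y)$, where $k:=|N(y)\cap U|$, need not be below $D/5$. I would try to handle this case by exploiting the form of the target, whose error term is of size roughly $t$ per unit of $|N(x)\cap N(y)|$.

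The first thing to try is a second application of Theorem~\ref{thm:MW}. The idea is to extend $f$ to value $1$ also on part of $N(y)\cap U$. This shrinks the region where $f=-1$, but it makes $\Delta f(y)$ much less negative. One then bounds the extra covered set $N(N(y)\cap U)\cap N(x)$ separately, using Lemma~\ref{lem:pairs} or \eqref{eq:e(A)}.

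An alternative is to apply Lemma~\ref{lem:LW1} to the reversed edge $yx$ with $S=N(x)\setminus\{y\}$. This gives
\[
\bigl|N(y)\setminus N[N(x)]\bigr|<|N(x)\cap N(y)|+1+d(y)/D.
\]
So almost every vertex of $R_y$ outside $N(x)$ has neighbors in $N(x)$. Combined with the fact that small $d(y)$ forces small $|N(x)\cap N(y)|$, I would try to show that $d(y)<5(t-2)$ makes the right-hand side $4D/5-t(|N(x)\cap N(y)|+3)$ controllable, or else leads to a contradiction. I expect this small-degree case to be where the real work of the claim lies. Steps 1 and 2 are routine.
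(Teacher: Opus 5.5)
\textbf{Verdict: there is a genuine gap.} Your Steps 1 and 2 are correct and match the first part of the paper's argument. The paper also applies \eqref{eq:LW} with $S=R_y$, uses $d(y)-|R_y|\le t-2$, and covers $N(x)$ by $N(x)\setminus(N[R_y]\cup N(y))$, $N(x)\cap N(y)$, $C_y$ and $\{y\}$. This gives your bound $|C_y|>D-2|N(x)\cap N(y)|-2-(t-2)D/d(y)$.

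Step 3, however, leaves the claim unproved when $d(y)<5(t-2)$. Neither suggested route supplies what is missing, namely an upper bound on $D/d(y)$. Putting $f=1$ on vertices of $N(y)\cap U$ makes $N[S]\cap N(x)$ contain neighborhoods of $U$-vertices. These may be almost all of $N(x)$, and they do not count toward $C_y$, which is deliberately defined through $R_y$ only because Claim~\ref{cl:count} needs that. So this extra covered set cannot be bounded separately in any useful way. The reversed-edge inequality only controls which neighbors of $y$ have no neighbor in $N(x)$; it gives no lower bound on $d(y)$ in terms of $D$. What you need is an inequality showing that a large ratio $D/d(y)$ forces every vertex of $R_y$ to have many neighbors in $N(x)$, unless $|N(x)\cap N(y)|$ is large.

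\textbf{The missing step.} Apply \eqref{eq:LW} once more, with a singleton $S=\{z\}$ for some $z\in R_y$, and use that $z\notin U\cup\{x\}$, i.e.\ $|N(z)\cap N(x)|<\eps D$. Your proposal never uses this property of $R_y$. Since $|N(x)\cap(N[z]\cup N(y))|\le|N(z)\cap N(x)|+1+|N(x)\cap N(y)|$ and $|\{z\}\cap N(x)|\le 1$, the lemma gives $D-|N(z)\cap N(x)|-1-|N(x)\cap N(y)|<2+D-D/d(y)$, that is,
\[
\frac{D}{d(y)}<|N(z)\cap N(x)|+|N(x)\cap N(y)|+3<\eps D+|N(x)\cap N(y)|+3 .
\]
Because $c\ge t-2$, we have $(t-2)\eps\le 1/5$. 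Hence $(t-2)D/d(y)<D/5+(t-2)\bigl(|N(x)\cap N(y)|+3\bigr)$. Substituting this into your Step 2 bound yields $|C_y|>4D/5-t|N(x)\cap N(y)|-3t+4$, which is the claim, with no case distinction on $d(y)$.

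This is exactly where the error term $t(|N(x)\cap N(y)|+3)$ comes from. Positive curvature on $xy$ forces each single vertex of $N(y)\setminus\{x\}$ to see roughly $D/d(y)$ vertices of $N(x)$. For vertices of $R_y$, the definition of $U$ caps this at $\eps D$.
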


\begin{proof}
Since $y\notin W$ and $\delta(G)\ge t-1$, we have $ |N(y)\cap U|\le t-3$ and $|N(y)|\ge t-1$. Then $|R_y|=|N(y)\setminus\{x\}|-|N(y)\cap U|\ge (t-1-1)-(t-3)=1$, and so $R_y\neq\emptyset$. Moreover, 
\[d(y)-|R_y|=1+|N(y)\cap U|\le t-2.\]
It is worth mentioning that this is the only place where the condition $\delta(G)\ge t-1$ is used to bound $D$ in~\eqref{eq:goal}.  Therefore, the bound $\Delta(G)<5c(t)^3t^2$   holds for the  class of graphs $G$ with $\delta(G)\ge t-1$ and no $K_t$ minor  such that for every  edge  $xy\in E(G)$ and every nonempty $S\subseteq N(y)\setminus\{x\}$, the inequality of Lemma~\ref{lem:LW1} holds.

To bound $|C_y|$, let $z\in R_y$. Then $z\ne x$ and $z\notin U$. By the choice of $U$,   $|N(z)\cap N(x)|<\eps D$. Now applying the inequality~\eqref{eq:LW} to the edge  $xy$ with $S=\{z\}$, together with 
\[|N(x)\cap(N[z]\cup N(y))|\le |N[z]\cap N(x)|+|N(y)\cap N(x)|\le |N(z)\cap N(x)|+1+|N(y)\cap N(x)|\]
and $|\{z\}\cap N(x)|\le 1$, yields
\[
  D-\bigl(|N(z)\cap N(x)|+|N(y)\cap N(x)|+1\bigr)\le |N(x)\setminus(N[z]\cup N(y))|<2+D(1-1/d(y)).\]
  It follows that 
  \begin{equation}\label{eq:Dy}
      \frac{D}{d(y)}<|N(z)\cap N(x)|+|N(y)\cap N(x)|+3<\eps D+|N(y)\cap N(x)|+3.
\end{equation}
Next, since $|R_y\cap N(x)|\le|N(y)\cap N(x)|$ and $d(y)-|R_y|\le t-2$, applying the inequality~\eqref{eq:LW} again to the edge  $xy$   with   $S=R_y$ yields
\[
  \bigl|N(x)\setminus(N[R_y]\cup N(y))\bigr|<|N(y)\cap N(x)|+1+\frac{(d(y)-|R_y|)D}{d(y)}\le|N(y)\cap N(x)|+1+\frac{(t-2)D}{d(y)},
\]
and so  by inequality~\eqref{eq:Dy}
\[
  \bigl|N(x)\setminus(N[R_y]\cup N(y))\bigr|< |N(y)\cap N(x)|+1+(t-2)(\eps D+|N(y)\cap N(x)|+3).
\]
Since $R_y\subseteq N(y)$, every vertex of $N(x)\cap N[R_y]$ lies in $N(y)$ or in $N(R_y)\cap N(x)\subseteq C_y\cup\{y\}$.  Hence 
\[N(x)\subseteq \bigl(N(x)\setminus(N[R_y]\cup N(y))\bigr)\cup(N(x)\cap N(y))\cup C_y\cup\{y\},\]
and so
\[
  D<|N(y)\cap N(x)|+1+(t-2)(\eps D+|N(y)\cap N(x)|+3)+|N(y)\cap N(x)|+|C_y|+1.
\]
It follows that  $|C_y|>D-(t-2)\eps D-t|N(y)\cap N(x)|-3t+4$. As $c\ge t-2$, we have $(t-2)\eps=(t-2)/(5c)\le 1/5$. Therefore, $|C_y|>  D-D/5-t|N(y)\cap N(x)|-3t+4$. It follows that  $|C_y|>4D/5-t(|N(x)\cap N(y)|+3)$.
\end{proof}

\begin{claim}\label{cl:count}
$\displaystyle\sum_{y\in Z}|C_y|\le 2D^2/5$.
\end{claim}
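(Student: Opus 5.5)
The plan is to bound $\sum_{y\in Z}|C_y|$ by counting ordered pairs $(y,v)$ with $y\in Z$ and $v\in C_y$. We split these pairs according to whether the witnessing vertex $z\in R_y$ lies outside $N[x]$ or inside $N(x)$, and bound the two parts by Lemma~\ref{lem:pairs} and by a direct path count in $G[N(x)]$, respectively.

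\textbf{Setup.} Set $r:=\lceil \eps D\rceil-1$. Since $\eps D\ge c^2t^2$, we have $r\ge 2$ and $r<\eps D$. Every $z\in R_y$ satisfies $z\ne x$ and $z\notin U$, so $|N(z)\cap N(x)|<\eps D$, hence $|N(z)\cap N(x)|\le r$ as this quantity is an integer. By definition of $C_y$, for each pair $(y,v)$ with $v\in C_y$ there is some $z\in R_y$ with $z\in N(y)\cap N(v)$ and $v\ne y$. Fix one such $z$ for each pair.

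\textbf{First type: $z\notin N[x]$.} Then $\{y,v\}$ is a pair of distinct vertices of $N(x)$ with a common neighbor $z\in V(G)\setminus N[x]$ satisfying $|N(z)\cap N(x)|\le r$. So $\{y,v\}\in\mathcal P_r$. Each unordered pair arises from at most two ordered pairs, so the number of pairs of this type is at most $2|\mathcal P_r|$.

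Since $G[N(x)]$ has no $K_{t-1}$ minor, we may take $m_D:=cD$. This uses $c(t-1)\le c(t)$, which holds because adding a universal vertex to a graph with no $K_{t-1}$ minor gives a graph with no $K_t$ minor and raises $e/n$. Lemma~\ref{lem:pairs} then gives
\[
2|\mathcal P_r|\le r(m_D-e)+2e .
\]

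\textbf{Second type: $z\in N(x)$.} Here $y\,z\,v$ is a path in $G[N(x)]$ with $y\ne v$ and $z\notin U$. For each $z$ there are at most $|N(z)\cap N(x)|\bigl(|N(z)\cap N(x)|-1\bigr)$ ordered pairs $(y,v)$ of distinct neighbors of $z$ in $N(x)$. Since $|N(z)\cap N(x)|\le r$ for every such $z$, summing over $z$ and using $\sum_{z\in N(x)}|N(z)\cap N(x)|=2e$ from~\eqref{eq:e(N(x))}, the number of pairs of this type is at most $(r-1)\cdot 2e$.

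\textbf{Combining.} Adding the two bounds gives
\[
\sum_{y\in Z}|C_y|\le r(m_D-e)+2e+2(r-1)e=r\,m_D+r\,e\le 2rcD<2\eps D\cdot cD=\tfrac{2}{5}D^2 ,
\]
using $e\le cD$ and $m_D=cD$.

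\textbf{Main obstacle.} The delicate point is constant bookkeeping. A naive bound of $\eps D\cdot 2e$ for the second type plus $\eps D\,m_D$ for the first type gives $3D^2/5$, which is too weak. Two refinements fix this. First, keep the $-e$ term in Lemma~\ref{lem:pairs}. Second, use the factor $r-1$ from $v\ne y$ in the path count. Together they make the $e$-terms combine to $r\,e$. The choice $r<\eps D$ together with the integrality of $|N(z)\cap N(x)|$ makes the final inequality strict.
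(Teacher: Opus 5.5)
Your proof is correct and follows essentially the same argument as the paper. Like the paper, you split the ordered pairs by whether the witness $z\in R_y$ lies outside $N[x]$, handled by Lemma~\ref{lem:pairs} with $m_D=cD$, or inside $N(x)$, handled by counting paths $y\,z\,v$ in $G[N(x)]$. You keep the $-e$ term from the lemma, so the $e$-terms cancel to $\eps D(cD+e)\le 2D^2/5$.

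The differences are only cosmetic:
\begin{itemize}
\item Your choice $r=\lceil\eps D\rceil-1$ in place of $\lfloor\eps D\rfloor$ gives a strict inequality, which is not needed.
\item You justify $m_D=cD$ through $c(t-1)\le c(t)$. The paper simply notes that a graph with no $K_{t-1}$ minor has no $K_t$ minor.
\end{itemize}
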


\begin{proof} Let $\mathcal{P}:=\{(y, y')\mid y\in Z \text{ and } y'\in C_y\}$. Then $|\mathcal{P}|=\sum_{y\in Z}|C_y|$. Recall that every vertex in $C_y$ has a neighbor in $R_y$.   For each   pair $(y, y')\in\mathcal{P}$,  choose $z\in R_y$ such that $z$ is adjacent to $y'$. By the choice of $R_y$,  $z\notin U$ and $z\ne x$. By the choice of $C_y$,    $y\ne y'$  and $y, y'\in N(z)\cap N(x)$.  We count the pairs $(y,y')$ by considering separately the two cases $z\in V(G)\setminus N[x]$ and $z\in N(x)$. Let  $\mathcal{P}_1:=\{(y, y')\in\mathcal{P}\mid  z\in V(G)\setminus N[x]\}$ and $\mathcal{P}_2:=\{(y, y')\in\mathcal{P}\mid  z\in  N(x)\}$. Then $|\mathcal{P}|=|\mathcal{P}_1|+|\mathcal{P}_2|$.

Suppose $z\in V(G)\setminus N[x]$. Let $r:=\lfloor\eps D\rfloor$. Then $r\ge 2$ because  $\eps D\ge c^2t^2$. Since $|N(z)\cap N(x)|$ is an integer and $|N(z)\cap N(x)|<\eps D$, we have $|N(z)\cap N(x)|\le r$. Thus  $\{y,y'\}\in \mathcal{P}_r$, where $\mathcal{P}_r$ is defined in Lemma~\ref{lem:pairs}. Note that every graph with no $K_{t-1}$ minor on $D$ vertices has no $K_t$ minor and hence has at most $cD$ edges. By Lemma~\ref{lem:pairs} with $m_D= c(t)D$, we have $|\mathcal{P}_r|\le\frac{r}{2}\,(cD-e)+e$.  Thus $|\mathcal{P}_1|\le2|\mathcal{P}_r|\le r(cD-e)+2e$.

 Suppose next  $z\in N(x)$. Then $|N(z)\cap N(x)|<\eps D$ because $z\notin U$.  Recall that $y, y'\in N(z)\cap N(x)$. For a fixed $z\in N(x)\setminus U$,  there are at most $|N(z)\cap N(x)|(|N(z)\cap N(x)|-1)\le(\eps D-1)|N(z)\cap N(x)|$ such pairs. Summing over all $z\in N(x)\setminus U$ and using $\eps D>1$ and inequality~\eqref{eq:e(N(x))},   we see that  
\[
 |\mathcal{P}_2|\le  (\eps D-1)\sum_{z\in N(x)}|N(z)\cap N(x)|=2(\eps D-1)e.
\]
It follows that 
\[ \begin{aligned} \sum_{y\in Z}|C_y| =|\mathcal{P}|&=|\mathcal{P}_1|+|\mathcal{P}_2|\\ &\le r(cD-e)+2e+2(\eps D-1)e\\ &\le \eps D(cD-e)+2\eps De\\ &=\eps D(cD+e)\\ &\le 2\eps cD^2\\ &=\frac{2D^2}{5},\end{aligned} \]
because   $r\le\eps D$, $0\le e\le cD$ and $\eps c=1/5$. 
\end{proof}

  By inequality~\eqref{eq:e(N(x))}, $\sum_{y\in Z}|N(x)\cap N(y)|\le \sum_{y\in N(x)}|N(x)\cap N(y)|\le 2cD$. This, together with Claim~\ref{cl:Z}, implies that 
\[
  \sum_{y\in Z}|C_y|\ \ge\ |Z|\Bigl(\frac{4D}{5}-3t\Bigr)-t\sum_{y\in Z}|N(x)\cap N(y)|\ \ge\ |Z|\Bigl(\frac{4D}{5}-3t\Bigr)-2tcD.
\]
By Claim~\ref{cl:count}, we have 
$|Z|\bigl(\frac{4D}{5}-3t\bigr)\le\frac{2D^2}{5}+2tcD$. Since $D\ge 5c^3t^2$, we have $3t\le 3D/(5c^3t)$ and $2tc\le 2D/(5c^2t)$. Thus 
\[
  |Z|\ \le\ D\cdot\frac{\frac25+\frac{2}{5c^2t}}{\frac45-\frac{3}{5c^3t}}\ =\ D\cdot\frac{1+\frac{1}{c^2t}}{2-\frac{3}{2c^3t}}\ \le\ D\Bigl(\frac12+\frac{1}{c^2t}\Bigr),
\]
where the last inequality is equivalent to $4c^3t-3c^2t-6\ge 0$, which holds as $c\ge 2$. Recall that  $N(x)$ is the disjoint union of $U\cap N(x)$, $W$ and $Z$. By Claims~\ref{cl:U} and~\ref{cl:W},
\[
  D\ \le\ |U|\Bigl(1+c\binom{t-2}{2}\Bigr)+|Z|\ \le\ (5c^2-4c)\Bigl(1+\frac{c(t-2)(t-3)}{2}\Bigr)+D\Bigl(\frac12+\frac{1}{c^2t}\Bigr).
\]
Thus 
 \[D\Bigl(1-\frac{2}{c^2t}\Bigr)\ \le\ c(5c-4)\bigl(2+c(t-2)(t-3)\bigr).
\]
 However, 
 \[ \begin{aligned} D\Bigl(1-\frac{2}{c^2t}\Bigr)- c(5c-4)\bigl(2+c(t-2)(t-3)\bigr)&\ge 5c^3t^2\Bigl(1-\frac{2}{c^2t}\Bigr)-c(5c-4)\bigl(2+c(t-2)(t-3)\bigr)\\ 
 &=(25t-30)c^3+(4t^2-20t+14)c^2-(10t-8)c\\  
 &>0,\end{aligned} \]
 because  $t\ge 5$ and $c\ge 3$, a contradiction.  This proves that $D<5c^3t^2$ and so $\Delta(G)<5c^3t^2$. \medskip

Assume now that $G$ is connected.  Let $\Delta:=\Delta(G)$ and $\delta:=\delta(G)$. Since $G$ is locally finite with positive curvature on every edge and $\Delta\ge\delta\ge t-1\ge 4$, by Lemma~\ref{lem:order},  $G$ is finite. It remains to bound $|V(G)|$.
    Since $t-1\le\delta\le\Delta\le\Delta^2$, we have
\[
  1+\frac{\Delta^2(\Delta-1)}{\delta}\le 1+\frac{\Delta^2(\Delta-1)}{t-1}\le\frac{\Delta^3}{t-1}.
\]
Moreover, $t^{3/2}/(t-1)\le\frac54\sqrt t$ as $t\ge 5$, and $\Delta^3\sqrt t>20$ as $\Delta\ge 4$. By Theorem~\ref{thm:polyorder}, \[
   |V(G)|<\Bigl(\frac54\,\Delta^3\sqrt t+1\Bigr)^2<\Bigl(\frac{13}{10}\,\Delta^3\sqrt t\Bigr)^2<2t\Delta^6<2t\,(5c^3t^2)^6=31250\,c^{18}t^{13}.
\]
 This completes the proof of Theorem~\ref{thm:main}.
\end{proof}

\section{Concluding remarks}\label{sec:remarks}
For $t\ge 5$, let $\Delta^*(t)$ denote the supremum of $\Delta(G)$ over all
connected graphs $G$ with no $K_t$ minor, minimum degree $\delta(G)\ge t-1$,
and positive Lin--Lu--Yau curvature on every edge. By
Theorem~\ref{thm:main},
\[
\Delta^*(t)<5c(t)^3t^2=O\bigl(t^5\log^{3/2}t\bigr).
\]
It would be interesting to determine whether $\Delta^*(t)=O(t)$. In particular, one may ask whether $\Delta^*(5)\le 17$, as in
Theorem~\ref{thm:LW}.
By Wagner's Theorem~\cite{Wagner}, every finite $4$-connected graph with no $K_5$ minor is planar. By Theorem~\ref{thm:LW}, to determine whether
$\Delta^*(5)\le 17$, it suffices to consider graphs of connectivity at most three.\bigskip

A graph $H$ is a \emph{topological minor} of a graph $G$ if $G$ contains a subdivision of $H$ as a subgraph. Every topological minor of $G$ is also a minor of $G$, so every graph with no $K_t$ minor has no topological $K_t$ minor.   Graphs with no topological $K_t$ minor also have only linearly many edges due to Mader~\cite{Mader68},  Bollob\'as and Thomason~\cite{BT98} and Koml\'os and Szemer\'edi~\cite{KS96}. A deep result of  Mader~\cite{Mader98} asserts that  every graph on $n\ge 5$ vertices with at least $3n-5$ edges contains a  topological  $K_5$ minor. However, the proof of Theorem~\ref{thm:main} uses contractions, most importantly in the proof of Lemma~\ref{lem:pairs} and in Claim~\ref{cl:W}. Note that $K^*_{t-2, n-t+2}$ have no topological $K_t$ minor.  The minimum degree condition in the following problem cannot be weakened to $\delta(G)\ge t-2$. It is  also worth noting that the treewidth argument used in the proof of Theorem~\ref{thm:polyorder} does not extend to forbidden topological minors. Thus, to bound $|V(G)|$ in this setting, one must instead use Lemma~\ref{lem:order}.

\begin{prob}\label{prob:top}
Let $t\ge 5$. Is there a function $g(t)$ such that every connected graph $G$ with no topological $K_t$ minor, $\delta(G)\ge t-1$, and positive Lin--Lu--Yau curvature on every edge satisfies $\Delta(G)\le g(t)$, and hence $|V(G)|\le g(t)^{2g(t)(g(t)-1)}$?  
\end{prob}

\section*{Acknowledgments}
Claude Opus 5.5 assisted the author in developing a proof of
Theorem~\ref{thm:main}, following a long  discussion in which the author shared observations and tools for addressing the problem. It also provided a survey of the topic on Ricci curvature and Lin--Lu--Yau curvature for the Introduction and assisted in creating the figure. The author subsequently developed and wrote the proofs, primarily using their own mathematical ideas and reasoning, and takes full responsibility for the accuracy and content of the manuscript.

\end{document}